\theoremstyle{definition}
\newtheorem{thm}{Theorem}[section]
\newtheorem{lem}[thm]{Lemma}
\newtheorem*{lem*}{Lemma}
\newtheorem*{thm*}{Theorem}
\newtheorem{prop}[thm]{Proposition}
\newtheorem{cor}[thm]{Corollary}
\newtheorem{defn}[thm]{Definition}
\newtheorem*{remark*}{Remark}
\newtheorem{remark}{Remark}
\newtheorem{example}{Example}
\newtheorem{cor/defn}[thm]{Corollary/Definition}
\DeclareMathOperator{\Par}{\mathbb{Y}}
\DeclareMathOperator{\RYT}{\mathrm{RYT}_{\geq 0}}
\DeclareMathOperator{\SYT}{\mathrm{SYT}}
\DeclareMathOperator{\PSYT}{\mathrm{PSYT}_{\geq 0}}
\DeclareMathOperator{\RSSYT}{\mathrm{RSSYT}_{\geq 0}}
\DeclareMathOperator{\sH}{\mathscr{H} }
\DeclareMathOperator{\sA}{\mathscr{A} }
\DeclareMathOperator{\sD}{\mathscr{D} }
\DeclareMathOperator{\Ind}{Ind}
\DeclareMathOperator{\Res}{Res}
\DeclareMathOperator{\MacD}{\mathfrak{P}}
\DeclareMathOperator{\sE}{\mathscr{E}}
\DeclareMathOperator{\Inv}{\mathrm{Inv}}
\DeclareMathOperator{\inv}{inv}
\DeclareMathOperator{\rk}{\mathrm{rk}}
\DeclareMathOperator{\APSYT}{\mathrm{APSYT}_{\geq 0}}
\DeclareMathOperator{\ASYT}{\mathrm{ASYT}}
\DeclareMathOperator{\Top}{\mathrm{top}}
\DeclareMathOperator{\Min}{\mathrm{min}}
\DeclareMathOperator{\I}{\mathrm{I}}
\DeclareMathOperator{\Hilb}{\mathrm{Hilb}}
\DeclareMathOperator{\SL}{\mathrm{SL}}
\DeclareMathOperator{\End}{\mathrm{End}}
\title{Murnaghan-Type Representations of the Elliptic Hall Algebra}
\author{Milo James Bechtloff Weising}
\date{\today}
\begin{document}

\maketitle

\abstract{We construct a new family of graded representations $\widetilde{W}_{\lambda}$ indexed by Young diagrams $\lambda$ for the positive elliptic Hall algebra $\mathcal{E}^{+}$ which generalizes the standard $\mathcal{E}^{+}$ action on symmetric functions. These representations have homogeneous bases of eigenvectors for the action of the Macdonald element $P_{0,1} \in \mathcal{E}^{+}$ generalizing the symmetric Macdonald functions. The analysis of the structure of these representations exhibits interesting combinatorics arising from the stable limits of periodic standard Young tableaux. We find an explicit combinatorial rule for the action of the multiplication operators $e_r[X]^{\bullet}$ generalizing the Pieri rule for symmetric Macdonald functions. Lastly, we obtain a family of interesting $q,t$ product-series identities which come from the analysis of certain combinatorial statistics associated to periodic standard Young tableaux.}

\tableofcontents

\section{Introduction}
This is a version of the author’s FPSAC 2024 submission. For the sake of satisfying the page limit for FPSAC most of the proofs are omitted. The complete version of this paper with full proofs will appear in the coming months.

The space of symmetric functions, $\Lambda$, is a central object in algebraic combinatorics deeply connecting the fields of representation theory, geometry, and combinatorics. In his influential paper \cite{MacDSLC}, Macdonald introduced a special basis $P_{\lambda}[X;q,t]$ for $\Lambda$ over $\mathbb{Q}(q,t)$ simultaneously generalizing many other important and well-studied symmetric function bases like the Schur functions $s_{\lambda}[X]$. These symmetric functions $P_{\lambda}[X;q,t]$, called the symmetric Macdonald functions, exhibit many striking combinatorial properties and can be defined as the eigenvectors of a certain operator $\Delta: \Lambda \rightarrow \Lambda$ called the Macdonald operator constructed using polynomial difference operators. It was discovered through the works of Bergeron, Garsia, Haiman, Tesler, and many others \cite{haiman2000hilbert} \cite{BGSciFi} \cite{BGHT} that variants of the symmetric Macdonald functions called the modified Macdonald functions $\widetilde{H}_{\lambda}[X;q,t]$ have deep ties to the geometry of the Hilbert schemes $\Hilb_n(\mathbb{C}^2).$ On the side of representation theory, it was shown first in full generality by Cherednik \cite{C_2001} that one can recover the symmetric Macdonald functions by considering the representation theory of certain algebras called the spherical double affine Hecke algebras (DAHAs) in type $GL_n.$

The positive elliptic Hall algebra (EHA), $\sE^{+}$, was introduced by Burban and Schiffmann \cite{BS} as the positive subalgebra of the Hall algebra of the category of coherent sheaves on an elliptic curve over a finite field. This algebra has connections to many areas of mathematics including, most importantly for the present paper, to Macdonald theory. In \cite{SV}, Schiffmann and Vasserot realize $\mathcal{E}^{+}$ as a stable limit of the positive spherical DAHAs in type $GL_n$. They show further that there is a natural action of $\mathcal{E}^{+}$ on $\Lambda$ aligning with the spherical DAHA representations originally considered by Cherednik. In particular, the action of $P_{0,1} \in \sE^{+}$ gives the Macdonald operator $\Delta$. The action of $\sE^{+}$ on $\Lambda$ can be realized as the action of certain generalized convolution operators on the torus equivariant $K$-theory of the schemes $\Hilb_n(\mathbb{C}^2).$

Dunkl and Luque in \cite{DL_2011} introduced symmetric and non-symmetric vector-valued (vv.) Macdonald polynomials. The term vector-valued here refers to polynomial-like objects of the form $\sum_{\alpha}c_{\alpha}X^{\alpha}\otimes v_{\alpha}$ for some scalars $c_{\alpha}$, monomials $X^{\alpha}$, and vectors $v_{\alpha}$ lying in some $\mathbb{Q}(q,t)$-vector space. The non-symmetric vv. Macdonald polynomials are distinguished bases for certain DAHA representations built from the irreducible representations of the finite Hecke algebras in type A. These DAHA representations are indexed by Young diagrams and exhibit interesting combinatorial properties relating to periodic Young tableaux. The symmetric vv. Macdonald polynomials are distinguished bases for the spherical (i.e. Hecke-invariant) subspaces of these DAHA representations. Naturally, the spherical DAHA acts on this spherical subspace with the special element $Y_1+\ldots + Y_n$ of spherical DAHA acting diagonally on the symmetric vv. Macdonald polynomials. 

Dunkl and Luque in \cite{DL_2011} (and in later work of Colmenarejo, Dunkl, and Luque \cite{CDL_2022} and Dunkl \cite{D_2019}) only consider the finite rank non-symmetric and symmetric vv. Macdonald polynomials. It is natural to ask if there is an infinite-rank stable-limit construction using the symmetric vv. Macdonald polynomials to give generalized symmetric Macdonald functions and an associated representation of the positive elliptic Hall algebra $\mathcal{E}^{+}$. In this paper, we will describe such a construction (Thm. \ref{main theorem}). We will obtain a new family of graded $\mathcal{E}^{+}$-representations $\widetilde{W}_{\lambda}$ indexed by Young diagrams $\lambda$ and a natural generalization of the symmetric Macdonald functions $\MacD_{T}$ indexed by certain labellings of infinite Young diagrams built as limits of the symmetric vv. Macdonald polynomials. For combinatorial reasons there is essentially a unique natural way to obtain this construction. For any $\lambda$ we will consider the increasing chains of Young diagrams $\lambda^{(n)} = (n-|\lambda|,\lambda)$ for $n \geq |\lambda| + \lambda_1$ to build the representations $\widetilde{W}_{\lambda}$. These special sequences of Young diagrams are central to Murnaghan's theorem \cite{M_1938} regarding the reduced Kronecker coefficients. As such we refer to the $\mathcal{E}^{+}$-representations $\widetilde{W}_{\lambda}$ as Murnaghan-type. For $\lambda = \emptyset$ we recover the $\mathcal{E}^{+}$ action on $\Lambda$ and the symmetric Macdonald functions $P_{\mu}[X;q,t]$. We will show that these Murnaghan-type representations $\widetilde{W}_{\lambda}$ are mutually non-isomorphic. The existence of these representations of the elliptic Hall algebra raises many questions about possible new relations between Macdonald theory and geometry. Other authors have constructed families of $\sE^{+}$-representations \cite{FFJMM_2011} \cite{FJMM_2012}. Although there should exist a relationship between the Murnaghan-type representations $\widetilde{W}_{\lambda}$ and those of other authors, the construction in this paper appears to be distinct from prior $\sE^{+}$-module constructions. 

For technical reasons regarding the misalignment of the spectrum of the Cherednik operators $Y_i$ we will need to restate many of the results of Dunkl and Luque in \cite{DL_2011} using a re-oriented version of the Cherednik operators $\theta_i$. This alternative choice of conventions greatly assists during the construction of the generalized Macdonald functions $\MacD_{T}$. The combinatorics underpinning the non-symmetric vv. Macdonald polynomials originally defined by Dunkl and Luque will be reversed in the conventions appearing in this paper. 

\begin{subsection}{Overview}
    Here we will give a brief overview of this paper. First, in Section \ref{defs and nots} we will review relevant definitions and notations as well as recall the stable-limit spherical DAHA construction of Schiffmann-Vasserot. In Section \ref{DAHA Modules from Young Diagrams} we will re-state many of the results of Dunkl-Luque but for the re-oriented Cherednik operators including describing the non-symmetric vv. Macdonald polynomials $F_{\tau}$ and their associated Knop-Sahi relations (Prop. \ref{weight basis prop}). We define (Def. \ref{connecting maps def}) the DAHA modules $V_{\lambda^{(n)}}$ and connecting maps $\Phi^{(n)}_{\lambda}: V_{\lambda^{(n+1)}} \rightarrow V_{\lambda^{(n)}}$ which will be used in the stable-limit process. Next in Section \ref{Positive EHA Representations from Young Diagrams}, we describe the spherical subspaces $W_{\lambda^{(n)}}$ of Hecke invariants of $V_{\lambda^{(n)}}$ and the symmetric vv. Macdonald polynomials $P_{T}$ including an explicit expansion of the $P_{T}$ into the $F_{\tau}$ (Prop. \ref{expansion of sym into nonsym}). We will use the connecting maps to define the stable-limit spaces $\widetilde{W}_{\lambda}$ and show in Thm. \ref{main theorem} that they possess a graded action of $\mathcal{E}^{+}$ and have a distinguished basis of generalized symmetric Macdonald functions $\MacD_{T}.$ In Section \ref{Pieri Rule section} we will obtain a Pieri formula (Cor. \ref{Pieri Rule}) for the action of $e_r[X]^{\bullet}$ on the generalized Macdonald functions $\MacD_T$. Lastly in Section \ref{Product-Sum Identities Sections}, we will look at an interesting family of $(q,t)$ product-series identities (Thm. \ref{prod-sum formula}) which follow naturally from the combinatorics in the prior sections of the paper.
\end{subsection}

\begin{subsection}{Acknowledgements}
The author would like to thank their advisor Monica Vazirani for her consistent guidance. The author would also like to thank Erik Carlsson, Daniel Orr, and Eugene Gorsky for helpful conversations about the elliptic Hall algebra and the geometry of Hilbert schemes. The author was supported during this work by the 2023 UC Davis Dean's Summer Research Fellowship.
\end{subsection}

\section{Definitions and Notations}\label{defs and nots}

\subsection{Some Combinatorics}
We start with a description of many of the combinatorial objects which we will need for the remainder of this paper.

\begin{defn}\label{Tableaux defs}
    A \textit{\textbf{partition}} is a (possibly empty) sequence of weakly decreasing positive integers. Denote by $\Par$ the set of all partitions. Given a partition $\lambda = (\lambda_1,\ldots, \lambda_r)$ we set $\ell(\lambda) := r$ and $|\lambda| := \lambda_1 + \ldots + \lambda_r.$ For $\lambda = (\lambda_1,\ldots, \lambda_r) \in \Par$ and $n \geq n_{\lambda}:= |\lambda| + \lambda_1$ we set $\lambda^{(n)}:= (n -|\lambda|, \lambda_1,\ldots, \lambda_r).$ We will identify partitions as defined above with \textit{\textbf{Young diagrams}} of the corresponding shape in English notation i.e. justified up and to the left. 
    
    Fix a partition $\lambda$ with $|\lambda| = n$. We will require each of the following combinatorial constructions for types of labelling of the Young diagram $\lambda$. If a diagram $\lambda$ appears as the domain of a labelling function then we are referring to the set of boxes of $\lambda$ as the domain.
\begin{itemize}
    \item A non-negative \textit{\textbf{reverse Young tableau}} $\RYT(\lambda)$ is a labelling $T: \lambda \rightarrow \mathbb{Z}_{\geq 0}$ which is weakly decreasing along rows and columns.
    \item A non-negative \textit{\textbf{reverse semi-standard Young tableau}} $\RSSYT(\lambda)$ is a labelling $T: \lambda \rightarrow \mathbb{Z}_{\geq 0}$ which is weakly decreasing along rows and strictly decreasing along columns.
    \item A \textit{\textbf{standard Young tableau}} $\SYT(\lambda)$ is a labelling $\tau:\lambda \rightarrow \{1,\ldots,n\}$ which is strictly increasing along rows and columns.
    \item A non-negative \textit{\textbf{periodic standard Young tableau}} $\PSYT(\lambda)$ is a labelling $\tau: \lambda \rightarrow \{ jq^{b}: 1\leq j \leq n, b \geq 0\}$ in which each $1\leq j \leq n$ occurs in exactly one box of $\lambda$ and where the labelling is strictly increasing along rows and columns. Here we order the formal products $jq^m$ by $jq^m < kq^{\ell}$ if $m > \ell$ or in the case that $m = \ell$ we have $j < k.$ Note that $SYT(\lambda) \subset \PSYT(\lambda)$.
\end{itemize}

\end{defn}

\begin{example}
  
\ytableausetup{centertableaux, boxframe= normal, boxsize= 2.25em}
\begin{ytableau}
 17q^7 & 15q^5 & 16q^5 & 11q^3 & 7q^1 & 2q^0 \\
 14q^6 & 12q^4 & 13q^4 & 9q^2 & 8q^0 & \none \\
 10q^2 & 4q^1 & 5q^1 & 6q^1 & \none & \none \\
 3q^1 & 1q^0 & \none & \none & \none & \none \\
\end{ytableau} $\in \PSYT(6,5,4,2)$

\end{example}

\begin{defn}\label{ordering and operations on tableaux defs}
Given a box, $\square$, in a Young diagram $\lambda$ we define the content of $\square$ as $c(\square) := a-b$ where $\square = (a,b)$ as drawn in the $\mathbb{N}\times \mathbb{N}$ grid. Let $\tau \in \PSYT(\lambda)$ and $1\leq i\leq n$. Whenever $\tau(\square) = iq^b$ for some box $\square \in \lambda$ we will write $c_{\tau}(i):= c(\square)$ and $w_{\tau}(i):= b.$ Let $1\leq j \leq n-1$ and suppose that for some boxes $\square_1,\square_2 \in \lambda$ that $\tau(\square_1) = jq^m$ and $\tau(\square_2) = (j+1)q^{\ell}$. Let $\tau'$ be the labelling defined by $\tau'(\square_1) = (j+1)q^m$, $\tau'(\square_2) = jq^{\ell}$, and $\tau'(\square) = \tau(\square)$ for $\square \in \lambda \setminus \{\square_1,\square_2\}$. If $\tau' \in \PSYT(\lambda)$ then we write $s_j(\tau):= \tau'$. Let $\Psi(\tau) \in \PSYT(\lambda)$ be the labelling defined by whenever $\tau(\square) = kq^a$ then either $\Psi(\tau)(\square) = (k-1)q^a$ when $k \geq 2$ or $\Psi(\tau)(\square) = nq^{a+1}$ when $k = 1.$ We give the set $\PSYT(\lambda)$ a partial order defined by the following cover relations.
\begin{itemize}
    \item For all $\tau \in \PSYT(\lambda)$, $\Psi(\tau) > \tau.$
    \item If $w_{\tau}(i)<w_{\tau}(i+1)$ then $s_i(\tau) > \tau.$
    \item If $w_{\tau}(i) = w_{\tau}(i+1)$ and $c_{\tau}(i)-c_{\tau}(i+1) > 1$ then $s_i(\tau) > \tau.$
\end{itemize}
Define the map $\mathfrak{p}_{\lambda}: \PSYT(\lambda) \rightarrow \RYT(\lambda)$ by $\mathfrak{p}_{\lambda}(\tau)(\square) = b$ whenever $\tau(\square) = iq^b.$ We will write $\PSYT(\lambda;T)$ for the set of all $\tau \in \PSYT(\lambda)$ with $\mathfrak{p}_{\lambda}(\tau) = T \in \RYT(\lambda).$
\end{defn}

\begin{example} 
$\Psi \left( \ytableausetup{centertableaux, boxframe= normal, boxsize= 2.25em}
\begin{ytableau}
 1q^7& 3q^5 & 5q^5 & 8q^2 & 12q^1 & 17q^0 \\
 2q^6& 4q^5 & 6q^5 & 14q^0 & 16q^0 & \none \\
 7q^2& 10q^1 & 11q^1 & 15q^0 & \none & \none \\
 9q^1& 13q^0 & \none & \none & \none & \none \\
\end{ytableau} \right) = \ytableausetup{centertableaux, boxframe= normal, boxsize= 2.25em}
\begin{ytableau}
 17q^8& 2q^5 & 4q^5 & 7q^2 & 11q^1 & 16q^0 \\
 1q^6& 3q^5 & 5q^5 & 13q^0 & 15q^0 & \none \\
 6q^2& 9q^1 & 10q^1 & 14q^0 & \none & \none \\
 8q^1& 12q^0 & \none & \none & \none & \none \\
\end{ytableau}$

\end{example}

\begin{lem}\label{locally maximal periodic tableau}
    Let $\lambda \in \Par$ and $T \in \RYT(\lambda).$ There are unique $\Min(T), \Top(T) \in \PSYT(\lambda;T)$ such that for all $\tau \in \PSYT(\lambda)$ with $\mathfrak{p}_{\lambda}(\tau) = T$, $\Min(T) \leq \tau \leq \Top(T).$ 
\end{lem}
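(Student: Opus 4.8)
The plan is to reduce the statement to an analysis of the single fiber $\PSYT(\lambda;T)=\mathfrak{p}_\lambda^{-1}(T)$ and then to exhibit $\Min(T)$ and $\Top(T)$ explicitly, proving extremality by a monovariant argument. First I would observe that $\mathfrak{p}_\lambda\colon\PSYT(\lambda)\to\RYT(\lambda)$ is weakly monotone for the entrywise partial order on $\RYT(\lambda)$: an $s_i$-cover relation does not change $\mathfrak{p}_\lambda$, while a cover relation $\Psi(\tau)>\tau$ increases $\mathfrak{p}_\lambda$ by one in the single box carrying the label $1$ and fixes all other boxes. Hence along any saturated chain the value of $\mathfrak{p}_\lambda$ is weakly increasing and is strictly increasing across each $\Psi$-step, so a chain joining two elements of $\PSYT(\lambda;T)$ uses only $s_i$-cover relations and stays inside $\PSYT(\lambda;T)$. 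Thus it suffices to find a minimum and a maximum of $\PSYT(\lambda;T)$ for the order generated by the $s_i$-cover relations alone; uniqueness is then automatic from antisymmetry.

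Next I would record the structure of a fiber. Writing $b_1>\dots>b_k$ for the distinct values of $T$ and $L_p=T^{-1}(b_p)$, the set $\mu^{(p)}:=T^{-1}(\{b\ge b_p\})$ is a Young diagram and $L_p=\mu^{(p)}/\mu^{(p-1)}$ is a (possibly disconnected) skew shape; an element of $\PSYT(\lambda;T)$ is exactly a bijection $\lambda\to\{1,\dots,n\}$ whose restriction to each $L_p$ is strictly increasing along rows and columns, with no constraint relating different $L_p$ (two adjacent boxes in distinct level sets automatically satisfy the $\PSYT$ inequality, as their $q$-exponents differ). One then checks the two cases of an $s_i$-move: if the boxes of $i$ and $i+1$ lie in different level sets, $s_i$ is always defined and $s_i(\tau)>\tau$ exactly when $w_\tau(i)<w_\tau(i+1)$; if they lie in the same $L_p$, $s_i$ is the usual swap of an increasing filling of a skew shape and $s_i(\tau)>\tau$ exactly when $c_\tau(i)-c_\tau(i+1)>1$. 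I then define $\Top(T)$ by labelling the boxes of $L_1$ in row-reading order by $1,\dots,|L_1|$, then the boxes of $L_2$ in row-reading order by the next block of consecutive integers, and so on; and $\Min(T)$ by instead labelling $L_k$ first in column-reading order, then $L_{k-1}$, and so on. Both are genuine elements of $\PSYT(\lambda;T)$ because row- and column-reading fillings of a skew shape are strictly increasing.

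To prove extremality I would use the monovariant $\Phi(\tau)=\bigl(-\sum_j j\,w_\tau(j),\ \sum_j j\,c_\tau(j)\bigr)$, ordered lexicographically: a type-2 up-move raises the first coordinate by $w_\tau(i+1)-w_\tau(i)>0$, and a type-3 up-move fixes the first coordinate while raising the second by $c_\tau(i)-c_\tau(i+1)\ge 2$. So $\Phi$ strictly increases along every up-move, and since $\PSYT(\lambda;T)$ is finite, iterating up-moves from any $\tau$ terminates at an element with no up-move, which is therefore $\ge\tau$. By the case analysis above, any such element has weakly decreasing exponent sequence $w(1),\dots,w(n)$, and since exactly $|L_p|$ labels carry exponent $b_p$ this forces the block assignment of $\Top(T)$; moreover it has no type-3 up-move inside any $L_p$. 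So the whole problem comes down to one claim about a skew shape $L=\mu^{(p)}/\mu^{(p-1)}$: an increasing filling of $L$ with no type-3 up-move is the row-reading filling.

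The proof of this claim is the step I expect to be the main obstacle, and I would argue it directly. If an increasing filling $\sigma$ of $L$ is not its row-reading filling, there is an $i$ whose box follows the box of $i+1$ in row-reading order; strict monotonicity of $\sigma$ forbids these two boxes from sharing a row or column, and the nesting $\mu^{(p-1)}\subseteq\mu^{(p)}$ forbids the box of $i$ from lying strictly below and strictly to the right of the box of $i+1$ (inspect the box in the row of $i+1$ and the column of $i$), so the box of $i$ lies strictly below and strictly to the left of that of $i+1$. A routine check then shows $s_i$ is defined and $c_\sigma(i)-c_\sigma(i+1)\ge 2$, producing an up-move, a contradiction. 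Hence $\sigma$ respects row-reading order on consecutive values, i.e.\ it is the row-reading filling, so $\Top(T)$ is the unique up-move-free element and therefore the maximum of $\PSYT(\lambda;T)$. The argument for $\Min(T)$ is the transpose: replace up-moves by down-moves, ``weakly decreasing'' by ``weakly increasing'', and row-reading by column-reading. With the reduction of the first paragraph, this gives the lemma.
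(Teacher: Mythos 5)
Your overall architecture is sound: the monotonicity of $\mathfrak{p}_{\lambda}$ along covers (so that comparabilities between two elements of $\PSYT(\lambda;T)$ are witnessed by $s_i$-chains staying inside the fiber), the description of the fiber as independent strictly increasing fillings of the level-set skew shapes, and the lexicographic monovariant showing that iterated up-moves (resp.\ down-moves) terminate, so that it suffices to identify the unique up-move-free (resp.\ down-move-free) element. The problem is the final identification, and it is not cosmetic: you have the extremal fillings backwards. With the paper's conventions the content is column minus row (this is forced by the displayed computation of $b_T$ in the example following Definition \ref{decomposing RYT into SYT and partition}), so in the configuration you reach at the end --- box of $i$ strictly below and strictly to the \emph{left} of the box of $i+1$ --- one has $c_\sigma(i)-c_\sigma(i+1)\le -2$, not $\ge 2$. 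That configuration is therefore not a type-3 up-move, your contradiction disappears, and the claim it was meant to prove is false: for a $2\times 2$ level set of equal $T$-values (allowed, since $T$ is only a reverse Young tableau) the \emph{row}-reading filling admits the type-3 up-move swapping its two middle labels, while the \emph{column}-reading filling is the one with no up-moves. Indeed, your candidates already disagree with Example \ref{example of labelling types}: there the four boxes labelled $5$ form a $2\times 2$ block, and the paper's $\Top(T)$ fills it in column order (and $\Min(T)$ in row order), consistent with the column-standard convention used to define $S(T)$.

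The repair is local. The correct statements are: the maximum $\Top(T)$ assigns blocks of consecutive labels to level sets in decreasing order of the exponent and uses the \emph{column}-reading order inside each level set, while the minimum $\Min(T)$ uses increasing order of exponent and the \emph{row}-reading order inside each level set. For the key skew-shape claim, argue: if an increasing filling of a level set is not its column-reading filling, there are consecutive labels with the box of $i+1$ in a strictly smaller column (same column is impossible); if the box of $i+1$ is also strictly above the box of $i$, the corner box in the row of $i+1$ and the column of $i$ lies in the same level set by the nesting of the $\mu^{(p)}$ and its label would have to be strictly between $i$ and $i+1$, a contradiction; if it is strictly below, then $c(i)-c(i+1)\ge 2$ with a valid swap (different row and column), i.e.\ an up-move. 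Transposing gives the down-move statement with row-reading. Your exponent-block argument and the reduction to the fiber are fine as written, so with this sign correction and the swap of the two explicit candidates the proof goes through.
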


\begin{example}\label{example of labelling types}
    Given
  $T = $  \ytableausetup{centertableaux, boxframe= normal, boxsize= 2.25em}
\begin{ytableau}
 7& 5 & 5 & 2 & 1 & 0 \\
 6& 5 & 5 & 0 & 0 & \none \\
 2& 1 & 1 & 0 & \none & \none \\
 1& 0 & \none & \none & \none & \none \\
\end{ytableau} $\in \RYT(6,5,4,2)$
we have that 

$\Min(T) = $\ytableausetup{centertableaux, boxframe= normal, boxsize= 2.25em}
\begin{ytableau}
 17q^7& 12q^5 & 13q^5 & 10q^2 & 6q^1 & 1q^0 \\
 16q^6& 14q^5 & 15q^5 & 2q^0 & 3q^0 & \none \\
 11q^2& 7q^1 & 8q^1 & 4q^0 & \none & \none \\
 9q^1& 5q^0 & \none & \none & \none & \none \\
\end{ytableau}
and 
$\Top(T) = $\ytableausetup{centertableaux, boxframe= normal, boxsize= 2.25em}
\begin{ytableau}
 1q^7& 3q^5 & 5q^5 & 8q^2 & 12q^1 & 17q^0 \\
 2q^6& 4q^5 & 6q^5 & 14q^0 & 16q^0 & \none \\
 7q^2& 10q^1 & 11q^1 & 15q^0 & \none & \none \\
 9q^1& 13q^0 & \none & \none & \none & \none \\
\end{ytableau}.

\end{example}

\begin{defn}\label{decomposing RYT into SYT and partition}
Let $\lambda \in \Par$ with $|\lambda| = n$ and $T \in \RYT(\lambda).$ Define $\nu(T) \in \mathbb{Z}_{\geq 0}^{n}$ to be the vector formed by listing the values of T in decreasing order. Define $S(T) \in \SYT(\lambda)$ by ordering the boxes of $\lambda$ according to $\square_1 \leq \square_2$ if and only if 
\begin{itemize}
    \item $T(\square_1) > T(\square_2)$ or
    \item $T(\square_1) = T(\square_2)$ and $\square_1$ comes before $\square_2$ in the column-standard labelling of $\lambda.$
\end{itemize}
Define the statistic $b_T \in \mathbb{Z}_{\geq 0}$ by 
$$ b_T:= \sum_{i=1}^{n} \nu(T)_i( c_{S(T)}(i) + i-1).$$ 
Lastly, define the composition $\mu(T)$ of $n$ so that the Young subgroup $\mathfrak{S}_{\mu(T)}$ of $\mathfrak{S}_n$ is the stabilizer subgroup of $\Min(T)$ i.e. the group generated by the $s_i \in \mathfrak{S}_n$ such that the entries $iq^a$ and $(i+1)q^b$ occur in the same row of $\Min(T)$ for some $a,b \geq 0.$
\end{defn}

\begin{remark}
    For every $T \in \RYT(\lambda)$ we can recover $T$ from the pair $(S(T),\nu(T))$ by labelling $\lambda$ with the entries of $\nu(T)$ following the order of $S(T).$ Further, the standard Young tableau $S(T)$ is the largest such tableau following the partial order defined in Definition \ref{ordering and operations on tableaux defs}.
\end{remark}

\begin{example}
    For $T \in \RYT(6,5,4,2)$ as in Example \ref{example of labelling types} we have that 

 $S(T) = $\ytableausetup{centertableaux, boxframe= normal, boxsize= 2.25em}
\begin{ytableau}
 1& 3 & 5 & 8 & 12 & 17 \\
 2& 4 & 6 & 14 & 16 & \none \\
 7& 10 & 11 & 15 & \none & \none \\
 9& 13 & \none & \none & \none & \none \\
\end{ytableau} $\in \SYT(6,5,4,2),$

$\nu(T) = (7,6,5,5,5,5,2,2,1,1,1,1,0,0,0,0,0)\in \mathbb{Z}_{\geq 0}^{17},$
 
$b_T = 0+0+15+15+30+30+8+20+5+8+10+15+0+0+0+0+0 = 156,$

and $\mu(T) = (1,2,1,1,1,2,1,1,1,2,2,1,1).$
\end{example}

\begin{defn}\label{inversions defn}
    Let $\lambda \in \Par$, with $|\lambda|= n$ and $\tau \in \PSYT(\lambda)$ with $T = \mathfrak{p}_{\lambda}(\tau).$ An ordered pair of boxes $(\square_1,\square_2) \in \lambda \times \lambda$ is called an \textit{\textbf{inversion pair}} of $\tau$ if $S(T)(\square_1) < S(T)(\square_2)$ and $i > j$ where $\tau(\square_1) = iq^a$, $\tau(\square_2) = jq^b$ for some $a,b \geq 0.$
    The set of all inversion pairs of $\tau$ will be denoted by $\Inv(\tau).$ We will use the shorthand $\I(T)$ for the set $\Inv(\Min(T)).$
\end{defn}

\begin{example}
    In the labelling 
    \ytableausetup{centertableaux, boxframe= normal, boxsize= 2.25em}
\begin{ytableau}
 17q^7& 12q^5 & 13q^5 & 10q^2 & 6q^1 & 1q^0 \\
 16q^6& 14q^5 & 15q^5 & 2q^0 & 3q^0 & \none \\
 11q^2& 7q^1 & 8q^1 & 4q^0 & \none & \none \\
 9q^1& 5q^0 & \none & \none & \none & \none \\
\end{ytableau} 
we have that the pairs $(17q^7, 12q^5)$, $(14q^5,13q^5)$, and $(5q^0,4q^0)$ are all inversions. Here we have referred to boxes according to their labels.
\end{example}

\subsection{Finite Hecke Algebra}
Here we give a review of the finite Hecke algebras in type A and give a description of their irreducible representations.

\begin{defn}\label{finite hecke alg defn}
    Define the \textit{\textbf{finite Hecke algebra}} $\sH_n$ to be the $\mathbb{Q}(q,t)$-algebra generated by $T_1,\ldots, T_{n-1}$ subject to the relations 
    \begin{itemize}
        \item $(T_i-1)(T_i+t) = 0$ for $1\leq i \leq n-1$
        \item $T_iT_{i+1}T_1 = T_{i+1}T_iT_{i+1}$ for $1\leq i \leq n-2$
        \item $T_iT_j = T_jT_i$ for $|i-j| > 1.$
    \end{itemize}
    We define the special elements $\overline{\theta}_1,\ldots, \overline{\theta}_n \in \sH_n$ by $\overline{\theta}_1 := 1$ and $\overline{\theta}_{i+1}:= tT_{i}^{-1}\overline{\theta}_{i}T_{i}^{-1}$ for $1\leq i \leq n-1$. Further, define $\overline{\varphi}_1,\ldots, \overline{\varphi}_{n-1}$ by $\overline{\varphi}_i: = (tT_{i}^{-1})\overline{\theta}_i - \overline{\theta}_{i}(tT_i^{-1}).$ For a permutation $\sigma \in \mathfrak{S}_n$ and a reduced expression $\sigma = s_{i_1}\cdots s_{i_r}$ we write $T_{\sigma} := T_{i_1}\cdots T_{i_r}.$
\end{defn}

\begin{defn}\label{irreps for finite hecke defn}
    Let $\lambda \in \Par$ with $|\lambda| = n$. By an abuse of notation we will label by $\lambda$ the $\sH_n$-module spanned by $\tau \in \SYT(\lambda)$ defined by the following relations:
    \begin{itemize}
        \item $\overline{\theta}_i(\tau) = t^{c_{\tau}(i)}\tau$
        \item If $s_i(\tau) > \tau$ then $\overline{\varphi}_i(\tau) = (t^{c_{\tau}(i)}-t^{c_{\tau}(i+1)})s_i(\tau).$
        \item If the labels $i,i+1$ are in the same row in $\tau$ then $T_i(\tau) = \tau.$
        \item If the labels $i,i+1$ are in the same column in $\tau$ then $T_i(\tau) = -t\tau.$
    \end{itemize}
\end{defn}

\begin{lem}
    Let $\lambda \in \Par$, $n \geq n_{\lambda}$, and $\square_0 \in \lambda^{(n+1)}/\lambda^{(n)}$. There is a $\sH_{n}$-module map $\mathfrak{q}_{\lambda}^{(n)}:\lambda^{(n+1)}\rightarrow \lambda^{(n)}$ given by $$\mathfrak{q}_{\lambda}^{(n)}(\tau) := \begin{cases}
    \tau|_{\lambda^{(n)}} & \tau(\square_0) = n+1\\
    0 & \tau(\square_0) \neq n+1.
     \end{cases} $$
\end{lem}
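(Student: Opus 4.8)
The plan is to verify directly that the proposed linear map $\mathfrak{q}_{\lambda}^{(n)}$ is well-defined on the spanning set $\SYT(\lambda^{(n+1)})$ and then intertwines the $\sH_n$-actions. First I would note that the restriction $\tau|_{\lambda^{(n)}}$ genuinely lands in $\SYT(\lambda^{(n)})$ precisely when $\tau(\square_0) = n+1$: since $\square_0 = \lambda^{(n+1)}/\lambda^{(n)}$ is a single removable corner box of $\lambda^{(n+1)}$, deleting it leaves the shape $\lambda^{(n)}$, and if the label in $\square_0$ is the maximal entry $n+1$ then the remaining entries form a standard filling of $\lambda^{(n)}$ with $\{1,\ldots,n\}$; conversely if $\tau(\square_0)\neq n+1$ then $n+1$ sits in some box of $\lambda^{(n)}$ and restriction would not even be a tableau on the right shape in the relevant sense, so we simply send such $\tau$ to $0$. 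This makes $\mathfrak{q}_{\lambda}^{(n)}$ a well-defined $\mathbb{Q}(q,t)$-linear map $\lambda^{(n+1)} \to \lambda^{(n)}$.

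Next I would check that $\mathfrak{q}_{\lambda}^{(n)}$ commutes with each generator $T_i$ for $1 \leq i \leq n-1$ (note the index only runs up to $n-1$, so $s_i$ never moves the entry $n+1$). The verification is case-by-case according to the four relations in Definition \ref{irreps for finite hecke defn}, applied in $\lambda^{(n+1)}$ and compared with their counterparts in $\lambda^{(n)}$. The key observation is that for any $\tau \in \SYT(\lambda^{(n+1)})$ with $\tau(\square_0) = n+1$, and for $i \leq n-1$, the boxes containing $i$ and $i+1$ lie in $\lambda^{(n)}$, their relative position (same row, same column, or neither) is identical in $\tau$ and in $\tau|_{\lambda^{(n)}}$, and the contents $c_\tau(i), c_\tau(i+1)$ are unchanged by restriction. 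Moreover if $s_i(\tau) > \tau$ in $\SYT(\lambda^{(n+1)})$ then $s_i$ fixes $\square_0$ (again since $i+1 \leq n < n+1$), so $s_i(\tau)$ also has its $(n+1)$-entry in $\square_0$ and $\mathfrak{q}_{\lambda}^{(n)}(s_i(\tau)) = s_i(\mathfrak{q}_{\lambda}^{(n)}(\tau))$; whereas if $\tau(\square_0)\neq n+1$, then $s_i(\tau)(\square_0) = \tau(\square_0) \neq n+1$ as well, so both sides vanish. Feeding this into the $T_i$ and $\overline{\varphi}_i$ relations — and using that $T_i$ on the irreducible module is determined by the action on weight vectors via the $\overline\theta_i$-eigenvalues together with the same-row/same-column rules — yields $T_i \circ \mathfrak{q}_{\lambda}^{(n)} = \mathfrak{q}_{\lambda}^{(n)} \circ T_i$ in every case.

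The main obstacle, and the place requiring genuine care rather than bookkeeping, is the case where $\tau(\square_0) = n+1$ but the pair $\{i, i+1\}$ interacts with the box $\square_0$ in a subtle way: this can only happen through the content $c_\tau(i+1)$ when the box of $i+1$ is adjacent to $\square_0$. One must confirm that the $\overline\varphi_i$-action, i.e. the off-diagonal part $\overline\varphi_i(\tau) = (t^{c_\tau(i)} - t^{c_\tau(i+1)}) s_i(\tau)$, is computed from contents internal to $\lambda^{(n)}$ and that the possible target $s_i(\tau)$ (which could in principle have been ill-defined in $\lambda^{(n)}$ but well-defined in $\lambda^{(n+1)}$, or vice versa) is consistent — but since $s_i$ with $i \leq n-1$ only permutes boxes of $\lambda^{(n)}$, the set $\SYT(\lambda^{(n)})$ is closed under exactly the same $s_i$ moves as the corresponding moves in $\SYT(\lambda^{(n+1)})$ restricted to tableaux with $n+1$ in $\square_0$, and there is no discrepancy. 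Once all four relations are matched, $\mathfrak{q}_{\lambda}^{(n)}$ is an $\sH_n$-module map, completing the proof. A slicker alternative I would mention: this map is just the adjoint (under the natural bilinear forms) of the branching inclusion $\lambda^{(n)} \hookrightarrow \Res_{\sH_n}^{\sH_{n+1}} \lambda^{(n+1)}$ coming from the classical branching rule for Hecke algebras of type $A$, so its existence is immediate from semisimplicity and the fact that $\square_0$ is a removable corner; the explicit formula above just names the projection.
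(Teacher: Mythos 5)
Your argument is correct and is essentially the intended one (this FPSAC version of the paper omits the proof): since each $s_i$ with $i \leq n-1$ fixes the box containing $n+1$, the spans of standard tableaux with $n+1$ in $\square_0$ and with $n+1$ elsewhere are both $\sH_n$-stable, and because contents $c_\tau(i), c_\tau(i+1)$ and the same-row/same-column configurations are unchanged by restriction, the defining relations of the seminormal action match on both sides, which is exactly your case check; the branching-rule remark gives the same map more abstractly. One small point: the ``main obstacle'' you flag is vacuous, since the content of a box is intrinsic to its coordinates and is unaffected by adjacency to $\square_0$, so no extra care is actually needed there.
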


\subsection{Positive Affine Hecke Algebra}

We will need the following basic notions about affine Hecke algebras in type A.
 
\begin{defn}\label{first presentation of AHA}
    Define the \textit{\textbf{positive affine Hecke algebra}} $\sA_n$ to be the $\mathbb{Q}(q,t)$-algebra generated by $T_1,\ldots, T_{n-1}$ and $\theta_1,\ldots, \theta_n$ subject to the relations
    \begin{itemize}
        \item $T_1,\ldots, T_{n-1}$ generate $\sH_n$
        \item $\theta_i\theta_j = \theta_j\theta_i$ for all $1\leq i,j \leq n$
        \item $\theta_{i+1} = tT_i^{-1}\theta_i T_{i}^{-1}$ for $1\leq i \leq n-1$
        \item $T_i\theta_j = \theta_j T_i$ for $j \notin \{i,i+1\}$
    \end{itemize}

    Define the special elements $\pi_n$ and $\varphi_1,\ldots, \varphi_{n-1}$ of $\sA_n$ by 
    \begin{itemize}
        \item $\pi_n:= t^{n-1}\theta_1T_{1}^{-1}\cdots T_{n-1}^{-1}$
        \item $\varphi_i := (tT_{i}^{-1})\theta_i - \theta_i (tT_{i}^{-1}).$
    \end{itemize}
\end{defn}

\begin{remark}
     Note that the $\theta_i$ elements are distinct from the Cherednik elements $\xi_i$ defined in \cite{DL_2011} which after aligning the different finite Hecke algebra $T_i$ conventions satisfy $\pi_n = \xi_1T_1\ldots T_{n-1}$.
\end{remark}

\begin{defn}\label{map from affine to finite hecke}
    Define the $\mathbb{Q}(q,t)$-algebra homomorphism $\rho_n:\sA_n \rightarrow \sH_n$ by 
    \begin{itemize}
        \item $\rho_n(T_i) = T_i$ for $1\leq i \leq n-1$
        \item $\rho_n(\theta_i) = \overline{\theta}_i$ for $1\leq i \leq n.$
    \end{itemize}
    For a $\sH_n$-module $V$ we will denote by $\rho_n^{*}(V)$ the $\sA_n$-module with action defined for $v \in V$ and $w \in \sA_n$ by $w(v) := \rho_n(w)(v).$
\end{defn}

\begin{remark}
    Note for $\lambda \in \Par$ with $|\lambda| = n$ that $\rho_n^{*}(\lambda)$ is an irreducible $\sA_n$-module with a basis of $\theta$-weight vectors with distinct weights given by $\tau \in \SYT(\lambda).$
\end{remark}

\subsection{Positive Double Affine Hecke Algebra}
Here we describe the positive double affine Hecke algebras in type $GL_n$.

\begin{defn}\label{daha def}
    Define the \textit{\textbf{positive double affine Hecke algebra}} $\sD_n$ to be the $\mathbb{Q}(q,t)$-algebra generated by $T_1,\ldots,T_{n-1}$, $\theta_1,\ldots, \theta_n$, and $X_1,\ldots, X_n$ subject to the relations
    \begin{itemize}
        \item $T_1,\ldots,T_{n-1}$ and $\theta_1,\ldots, \theta_n$ generate $\sA_n$
        \item $X_iX_j = X_jX_i$ for $1\leq i,j \leq n$
        \item $X_{i+1} = tT_i^{-1}X_iT_i^{-1}$ for $1\leq i \leq n-1$
        \item $T_iX_j = X_jT_i$ for $1\leq i\leq n-1$ and $1\leq j \leq n$ with $j \notin \{i,i+1\}$
        \item $\pi_nX_i = X_{i+1}\pi_n$ for $1\leq i \leq n-1$
        \item $\pi_nX_n = qX_1\pi_n.$
    \end{itemize}
\end{defn}

\begin{remark}
    Note that $\sD_n$ has a $\mathbb{Z}_{\geq 0}$-grading determined by $\deg(X_i) = 1$ and $\deg(Y_i) = \deg(T_i) = 0.$
\end{remark}

\begin{defn}\label{spherical DAHA defn}
    Let $\epsilon^{(n)} \in \sH_n$ denote the (normalized) trivial idempotent given by 
    $$\epsilon^{(n)}:= \frac{1}{[n]_{t}!}\sum_{\sigma \in \mathfrak{S}_n} t^{{n\choose 2} - \ell(\sigma)} T_{\sigma}$$ where
    $[n]_{t}!:= \prod_{i=1}^{n}(\frac{1-t^i}{1-t}).$ The \textit{\textbf{positive spherical double affine Hecke algebra}} $\sD_n^{\text{sph}}$ is the (non-unital) subalgebra of $\sD_n$ given by $\sD_n^{\text{sph}}:= \epsilon^{(n)}\sD_n \epsilon^{(n)}.$ 
\end{defn}

\begin{remark}
    Given any $\sD_n$-module V the space $\epsilon^{(n)}(V)$ is naturally a $\sD_n^{\text{sph}}$-module. Note that $\sD_n^{\text{sph}}$ is unital with unit $\epsilon^{(n)}$ and that $\sD_n^{\text{sph}}$ has a grading inherited from $\sD_n.$
\end{remark}

\subsection{Positive Elliptic Hall Algebra}
Here we give a very brief description of the positive elliptic Hall algebra. 

\begin{defn}
    For $\ell > 0$ define the special elements $P_{0,\ell}^{(n)}, P_{\ell,0}^{(n)} \in \sD_n^{\text{sph}}$ by 
    \begin{itemize}
        \item $P_{0,\ell}^{(n)} = \epsilon^{(n)}\left( \sum_{i=1}^{n} \theta_i^{\ell} \right) \epsilon^{(n)}$
        \item $P_{\ell,0}^{(n)} = q^{\ell} \epsilon^{(n)} \left( \sum_{i=1}^{n} X_i^{\ell} \right) \epsilon^{(n)}.$
    \end{itemize}
\end{defn}

\begin{remark}
    Following \cite{SV}, we may also define elements $P_{a,b}^{(n)} \in \sD_n^{\text{sph}}$ for $(a,b) \in \mathbb{Z}^{2}\setminus \{(0,0)\}$ similarly using an algebra automorphism action by $\SL_{2}(\mathbb{Z})$ on $\sD_n^{\text{sph}}$. However, we will not need to work with these general elements directly as $P_{0,\ell}^{(n)}, P_{\ell,0}^{(n)}$ for $\ell >0$ generate $\sD_n^{\text{sph}}.$ 
\end{remark}

\begin{thm}\cite{SV}
    There is a unique graded algebra surjection $\sD_{n+1}^{\text{sph}}\rightarrow \sD_n^{\text{sph}}$ determined for $\ell > 0$ by 
    $ P_{0,\ell}^{(n+1)} \rightarrow P_{0,\ell}^{(n)}$ and $ P_{\ell,0}^{(n+1)} \rightarrow P_{\ell,0}^{(n)}.$
\end{thm}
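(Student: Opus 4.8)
The plan is to construct the surjection $\sD_{n+1}^{\mathrm{sph}}\to\sD_n^{\mathrm{sph}}$ as the restriction of a suitable map at the level of the full (non-spherical) algebras, and to verify that it is well-defined, graded, surjective, and compatible with the generating elements $P_{0,\ell}$, $P_{\ell,0}$. The cleanest route is via the ``polynomial representation'' picture: realize $\sD_{n+1}^{\mathrm{sph}}$ acting on $\epsilon^{(n+1)}\big(\mathbb{Q}(q,t)[X_1,\dots,X_{n+1}]\big)\cong\Lambda^{\leq n+1}$ (symmetric polynomials in $n+1$ variables) and $\sD_n^{\mathrm{sph}}$ on $\Lambda^{\leq n}$, and use the ``restriction'' (set-last-variable-to-$0$) map $\Lambda^{\leq n+1}\to\Lambda^{\leq n}$, which Schiffmann--Vasserot identify as the stable-limit connecting map. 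First I would recall from \cite{SV} that both $P_{0,\ell}^{(\bullet)}$ and $P_{\ell,0}^{(\bullet)}$ act on symmetric functions compatibly with this restriction map — concretely, $P_{0,\ell}$ acts (up to normalization) by a Macdonald-type difference operator and $P_{\ell,0}$ by multiplication by a power-sum-like symmetric function, both of which commute with $X_{n+1}\mapsto 0$. Since these elements generate $\sD_n^{\mathrm{sph}}$ (by the remark preceding the theorem), this already pins down the map on generators; the content is showing it extends to an algebra map.

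The key steps, in order: (1) Show $\sD_n^{\mathrm{sph}}$ acts faithfully on its polynomial representation $\epsilon^{(n)}\mathbb{Q}(q,t)[X_1,\dots,X_n]$ — this is standard for spherical DAHAs and lets us define the map by its action rather than by checking relations abstractly. (2) Define $\Theta_n:\Lambda^{\leq n+1}\to\Lambda^{\leq n}$ by $f(X_1,\dots,X_{n+1})\mapsto f(X_1,\dots,X_n,0)$; check it is a graded surjection of $\mathbb{Q}(q,t)$-vector spaces. (3) For each homogeneous $a\in\sD_{n+1}^{\mathrm{sph}}$, show there is a (necessarily unique, by faithfulness) $\bar a\in\sD_n^{\mathrm{sph}}$ with $\Theta_n\circ a=\bar a\circ\Theta_n$; equivalently, show $a$ preserves $\ker\Theta_n$ so that it descends. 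It suffices to check this on the generators $P_{0,\ell}^{(n+1)}$, $P_{\ell,0}^{(n+1)}$: for $P_{\ell,0}$, multiplication by the relevant symmetric function $p_\ell$-type element obviously commutes with setting $X_{n+1}=0$; for $P_{0,\ell}$, one uses the explicit form of the Macdonald operator and the fact that the ``extra'' term involving $X_{n+1}$ acquires a factor that kills it (or matches the lower operator) upon specialization — this is exactly the computation underlying Schiffmann--Vasserot's stable limit. (4) The assignment $a\mapsto\bar a$ is then automatically an algebra homomorphism (composition of operators transported through a fixed surjection), is graded (since $\Theta_n$ and all operators are graded), sends $P_{0,\ell}^{(n+1)}\mapsto P_{0,\ell}^{(n)}$ and $P_{\ell,0}^{(n+1)}\mapsto P_{\ell,0}^{(n)}$, and is surjective because its image contains a generating set of $\sD_n^{\mathrm{sph}}$. (5) Uniqueness of a graded map with the prescribed behavior on the $P_{0,\ell}$, $P_{\ell,0}$ is immediate since those elements generate.

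The main obstacle is step (3) for the element $P_{0,\ell}^{(n+1)}$: one must show the Macdonald-type difference operator $\epsilon^{(n+1)}(\sum_i\theta_i^\ell)\epsilon^{(n+1)}$ genuinely descends along the specialization $X_{n+1}=0$, which requires a concrete handle on how the $\theta_i$ (equivalently the Cherednik operators) act on the polynomial representation and on symmetric polynomials — in particular controlling the terms that shift the $(n+1)$-st variable. Everything else is formal: faithfulness of the polynomial representation (step 1) and the transport-of-structure argument (step 4) are routine once the representation is set up, and surjectivity (step 5) is a one-line consequence of the generation statement. I would therefore devote most of the writeup to spelling out the polynomial representation of $\sD_n^{\mathrm{sph}}$, identifying $\epsilon^{(n)}(\sum\theta_i^\ell)\epsilon^{(n)}$ with the classical Macdonald difference operator (citing \cite{SV} for the precise normalization), and checking the specialization compatibility of that operator.
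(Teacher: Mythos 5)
The paper does not actually prove this statement --- it is quoted from \cite{SV} --- so I am comparing your proposal against the argument in that source, which indeed proceeds, as you do, through the faithful action on symmetric polynomials and the specialization $X_{n+1}\mapsto 0$. Your overall skeleton (faithfulness, transport of structure through $\Theta_n$, checking only the generators, surjectivity and uniqueness from the generation statement) is the right one.

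The genuine gap is in your step (3), and it is precisely the crux of the theorem: the spherical Macdonald element $P_{0,\ell}^{(n+1)}=\epsilon^{(n+1)}(\sum_i\theta_i^{\ell})\epsilon^{(n+1)}$ does \emph{not} commute with setting $X_{n+1}=0$, and the extra term is neither killed nor absorbed into the lower operator. For the classical first Macdonald operator the stability reads $D_{n+1}\big|_{x_{n+1}=0}=tD_n+1$: the coefficient $A_{n+1}(x;t)$ specializes to $1$, not $0$, and the remaining coefficients pick up a factor of $t$. Equivalently, on eigenvectors the eigenvalue gains an extra summand coming from the new box with label $0$, so in the normalization of this paper the specialization map intertwines $P_{0,\ell}^{(n+1)}$ with $P_{0,\ell}^{(n)}$ only after subtracting the constants $\sum_{\square}t^{\ell c(\square)}$ --- this is exactly the content of Cor.~\ref{connecting maps and symmetrization of thetas}, whose correction terms would be unnecessary if your commutation claim were true. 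Consequently the homomorphism you build by transport sends $P_{0,\ell}^{(n+1)}$ to $P_{0,\ell}^{(n)}$ plus a nonzero scalar (times $\epsilon^{(n)}$), not to $P_{0,\ell}^{(n)}$, and the theorem as stated is not yet established. The missing ingredient is to compose with an automorphism of $\sD_n^{\text{sph}}$ which fixes the $P_{\ell,0}^{(n)}$ and translates each $P_{0,\ell}^{(n)}$ by the appropriate constant; the existence of such ``shift'' automorphisms is itself a nontrivial point (it is Lemma 1.3 of \cite{SV}, invoked again in the proof of Thm.~\ref{main theorem} here) and needs either the explicit presentation of $\sD_n^{\text{sph}}$ or a separate argument. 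With that correction inserted, the rest of your outline (kernel preservation, descent on generators, gradedness, surjectivity, uniqueness) goes through as you describe.
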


\begin{defn}\cite{SV}
    The \textit{\textbf{positive elliptic Hall algebra}} $\sE^{+}$ is the inverse limit of the graded algebras $\sD_n^{\text{sph}}$ with respect to the maps $\sD_{n+1}^{\text{sph}}\rightarrow \sD_n^{\text{sph}}.$ For $\ell >0$ define the special elements $P_{0,\ell}:= \lim_n P_{0,\ell}^{(n)}$ and $P_{\ell,0}:= \lim_n P_{\ell,0}^{(n)}.$ 
\end{defn}

The positive elliptic Hall algebra $\sE^{+}$ is generated by $P_{0,\ell}, P_{\ell,0}$ for $\ell >0$ and has $\mathbb{Z}_{\geq 0}$-grading determined by $\deg(P_{0,\ell}) = 0$ and $\deg(P_{\ell,0}) = \ell.$ Remarkably, there is a description of $\sE^{+}$ (and its Drinfeld double $\sE$ called the elliptic Hall algebra) given by straightforward generators and relations \cite{SV} which we will not detail here.

\section{DAHA Modules from Young Diagrams}\label{DAHA Modules from Young Diagrams}
\subsection{The $\sD_n$-module $V_{\lambda}$}
We begin by defining a collection of DAHA modules indexed by Young diagrams $\lambda \in \Par.$ These modules are the same as those appearing in \cite{DL_2011} but we take the approach of using induction from $\sA_n$ to $\sD_n$ for their definition.

 \begin{defn}
     Let $\lambda \in \Par$ with $|\lambda| = n$. Define the $\sD_n$-module $V_{\lambda}$ to be the induced module 
     $$V_{\lambda}:= \Ind_{\sA_n}^{\sD_n}\rho_n^{*}(\lambda).$$
 \end{defn}

These modules naturally have the basis given by $X^{\alpha}\otimes \tau$ where $X^{\alpha}$ is a monomial and $\tau \in \SYT(\lambda).$ Note that the action of $\pi_n$ on $V_{\lambda}$ is invertible so we may consider the action of $\pi_n^{-1}$ although we have not formally included $\pi_n^{-1}$ into the algebra $\sD_n.$

Using the theory of intertwiners for DAHA and some combinatorics we are able to show the following structural results. The $F_{\tau}$ appearing below are the version of the non-symmetric vv. Macdonald polynomials following our conventions.

\begin{prop}\label{weight basis prop}
    There exists a basis of $V_{\lambda}$ consisting of $\theta^{(n)}$-weight vectors $\{F_{\tau}: \tau \in \PSYT(\lambda)\}$ with distinct $\theta^{(n)}$-weights such that the following hold:
    \begin{itemize}
        \item $\theta_i^{(n)}(F_{\tau}) = q^{w_{\tau}(i)}t^{c_{\tau}(i)}F_{\tau}$
        \item If $\tau \in \SYT(\lambda)$ then $F_{\tau} = 1\otimes \tau.$
        \item If $s_i(\tau) > \tau$ then $$\left( tT_{i}^{-1} + \frac{(t-1)q^{w_{\tau}(i+1)}t^{c_{\tau}(i+1)}}{q^{w_{\tau}(i)}t^{c_{\tau}(i)}-q^{w_{\tau}(i+1)}t^{c_{\tau}(i+1)}}\right)F_{\tau} = F_{s_i(\tau)}.$$
        \item $F_{\Psi(\tau)} = q^{w_1(\tau)}X_n\pi_n^{-1}F_{\tau}.$
    \end{itemize}
\end{prop}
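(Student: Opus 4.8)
The plan is to construct the basis $\{F_\tau\}$ by induction on the partial order on $\PSYT(\lambda)$ defined in Definition \ref{ordering and operations on tableaux defs}, using the intertwiner operators of the DAHA. The base case is handled by the prescription $F_\tau = 1 \otimes \tau$ for $\tau \in \SYT(\lambda)$: since $\rho_n^*(\lambda)$ has a basis of $\theta$-weight vectors indexed by $\SYT(\lambda)$ with $\theta_i^{(n)}(1\otimes\tau) = \overline\theta_i(\tau) = t^{c_\tau(i)}\tau$, and $w_\tau(i) = 0$ for all $i$ when $\tau$ is genuinely standard, the first bullet holds with $q^{w_\tau(i)} = 1$. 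For the inductive step I would use the two ``raising'' cover relations. If $s_i(\tau) > \tau$, I define $F_{s_i(\tau)}$ by the stated formula using the normalized intertwiner $\varphi_i/(\text{scalar}) = tT_i^{-1} + \frac{(t-1)z_{i+1}}{z_i - z_{i+1}}$ where $z_j := q^{w_\tau(j)}t^{c_\tau(j)}$; the standard DAHA intertwiner identity $\varphi_i \theta_j = \theta_{s_i(j)}\varphi_i$ then shows $F_{s_i(\tau)}$ is a $\theta^{(n)}$-weight vector with the $s_i$-permuted weight, which is exactly $z_j$ for $\tau' = s_i(\tau)$ since $c_{s_i(\tau)}$ and $w_{s_i(\tau)}$ swap the $i,i{+}1$ values. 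If instead $\Psi(\tau) > \tau$, I define $F_{\Psi(\tau)} := q^{w_\tau(1)} X_n \pi_n^{-1} F_\tau$, which is the fourth bullet by fiat; the commutation relations $\pi_n X_i = X_{i+1}\pi_n$, $\pi_n X_n = q X_1 \pi_n$ together with $\pi_n \theta_i \pi_n^{-1}$ cycling the indices (derivable from the $\sA_n$ relations, since $\theta_{i+1} = tT_i^{-1}\theta_i T_i^{-1}$ and $\pi_n = t^{n-1}\theta_1 T_1^{-1}\cdots T_{n-1}^{-1}$) show that this operator sends the $\theta^{(n)}$-weight $(q^{w_\tau(i)}t^{c_\tau(i)})_i$ to the weight of $\Psi(\tau)$, where the entry that was $q^{w_\tau(1)}t^0$ becomes $q^{w_\tau(1)+1}t^0$ and all others shift index by one — matching the definition $\Psi(\tau)(\square) = (k-1)q^a$ or $nq^{a+1}$.

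Next I need to check this is well-defined: the partial order may present $F_\tau$ via different chains of cover relations up from $\SYT(\lambda)$, so I must verify the construction is independent of the chosen path. This reduces to checking that the intertwiner-type operators satisfy the appropriate braid and commutation relations on the relevant weight spaces — the $\varphi_i$ satisfy braid relations among themselves and $\varphi_i^2$ acts by a scalar, while $X_n \pi_n^{-1}$ interacts with the $\varphi_i$ in a controlled way coming from the DAHA relations $\pi_n X_i = X_{i+1}\pi_n$ and $\pi_n X_n = qX_1\pi_n$. Crucially, all the scalar denominators $z_i - z_{i+1}$ appearing are nonzero exactly under the hypotheses $w_\tau(i) < w_\tau(i+1)$ or ($w_\tau(i) = w_\tau(i+1)$ and $c_\tau(i) - c_\tau(i+1) > 1$) that define when $s_i(\tau) > \tau$ — this is where the specific form of the cover relations is used, and one checks $q^{w_\tau(i)}t^{c_\tau(i)} \ne q^{w_\tau(i+1)}t^{c_\tau(i+1)}$ in $\mathbb{Q}(q,t)$ precisely in those cases. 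Finally, a dimension/count argument shows the $F_\tau$ span: the monomials $X^\alpha \otimes \tau$ with $\tau \in \SYT(\lambda)$ and $\alpha \in \mathbb{Z}_{\geq 0}^n$ form a basis of $V_\lambda$, and one sets up a bijection (or at least an injection with the right leading-term/triangularity structure) between such pairs and $\PSYT(\lambda)$ via $\mathfrak{p}_\lambda$ and the repeated application of $\Psi$ and $s_i$; distinctness of the $\theta^{(n)}$-weights then forces linear independence.

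The main obstacle I expect is the well-definedness and the verification that $X_n \pi_n^{-1}$ (equivalently the operator realizing $\Psi$) genuinely maps $F_\tau$ to a nonzero multiple of $F_{\Psi(\tau)}$ rather than something with a correction term — i.e., that no lower-order terms appear. For the $\varphi_i$ intertwiners this non-degeneracy is classical (Knop--Sahi theory), but the mixing of the affine intertwiner $\varphi_i$ with the ``loop'' generator $X_n\pi_n^{-1}$ requires care: one must show that $\varphi_i$ and the $\Psi$-operator, restricted to the appropriate weight spaces, commute up to scalars in the way dictated by how $s_i$ and $\Psi$ interact as operations on $\PSYT(\lambda)$, and that the iterated application never hits a zero scalar or produces a vector outside the span of a single weight space. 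Establishing that each weight space of $\theta^{(n)}$ in $V_\lambda$ is one-dimensional (which follows once the $F_\tau$ are known to span, but is needed to pin down $F_{s_i(\tau)}$ and $F_{\Psi(\tau)}$ uniquely) is the circular-looking point that must be broken by the explicit combinatorial bijection with $\PSYT(\lambda)$; I would handle this by first proving the weight-multiplicity-one statement directly from the PBW-type basis $X^\alpha \otimes \tau$ and the explicit action of the $\theta_i^{(n)}$, then deducing everything else.
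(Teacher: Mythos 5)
The paper's FPSAC version omits the proof of this proposition, saying only that it follows from ``the theory of intertwiners for DAHA and some combinatorics''; your plan --- constructing $F_{\tau}$ inductively along the partial order on $\PSYT(\lambda)$ via the normalized intertwiners built from $\varphi_i$ and the affine operator $X_n\pi_n^{-1}$, starting from $F_{\tau} = 1\otimes \tau$ for $\tau \in \SYT(\lambda)$ --- is precisely that method (the re-oriented form of the Dunkl--Luque Yang--Baxter-graph construction). The genuine gap is the one you yourself flag and then do not close: the path-independence of the inductive definition, equivalently the statement that each $\theta^{(n)}$-weight space of $V_{\lambda}$ is at most one-dimensional and that the intertwiners act nondegenerately along every chain of covers. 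Your proposed shortcut --- that weight-multiplicity-one can be read off ``directly from the PBW-type basis $X^{\alpha}\otimes \tau$ and the explicit action of the $\theta_i^{(n)}$'' --- is not available as stated: the $\theta_i^{(n)}$ are not diagonal on that basis, and extracting their spectrum requires a triangularity statement with respect to a suitable (Bruhat-type) order on monomials together with the bijection between pairs $(\alpha,\tau)$ and $\PSYT(\lambda)$ matching the diagonal entries. That triangularity is exactly the content of the paper's subsequent proposition on the expansion of $F_{\Top(T)}$, which is itself obtained by induction on the partial order, i.e.\ from the intertwiner construction, not prior to it; so to break the circle you must either carry out the consistency (braid/commutation) checks for $\varphi_i$ and $X_n\pi_n^{-1}$ restricted to weight vectors, or prove the triangularity of the $\theta_i^{(n)}$ on $X^{\alpha}\otimes\tau$ independently. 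Neither is done in the proposal.

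Two smaller inaccuracies. First, the inequality $q^{w_{\tau}(i)}t^{c_{\tau}(i)} \neq q^{w_{\tau}(i+1)}t^{c_{\tau}(i+1)}$ holds for \emph{every} $\tau \in \PSYT(\lambda)$ and every $i$ (two boxes on the same diagonal of a Young diagram cannot carry consecutive labels with equal $q$-exponent, since the intermediate boxes would need labels strictly between $iq^a$ and $(i+1)q^a$), so the denominators never vanish and the cover conditions are not characterized by their nonvanishing; their actual role is to guarantee that $s_i(\tau)$ is again a valid periodic standard tableau and that the intertwiner does not act degenerately on $F_{\tau}$ (fixing or annihilating it, as happens when $i,i+1$ occupy adjacent boxes in a row or column with equal $q$-exponent). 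Second, in the weight computation for $X_n\pi_n^{-1}$ the moved entry is $q^{w_{\tau}(1)}t^{c_{\tau}(1)} \mapsto q^{w_{\tau}(1)+1}t^{c_{\tau}(1)}$, not $t^{0}$: the box containing the label $1$ need not have content $0$.
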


\begin{prop}\label{Mackey decomposition}
    The $\sD_n$-module $V_{\lambda}$ has the following decomposition into $\sA_n$-submodules:
    $$\Res^{\sD_n}_{\sA_n} V_{\lambda} = \bigoplus_{T \in \RYT(\lambda)} U_T$$ 
    where $U_T:= \text{span}_{\mathbb{Q}(q,t)}\{F_{\tau}: \mathfrak{p}_{\lambda}(\tau) = T\}.$
    Further, each $\sA_n$-module $U_T$ is irreducible.
\end{prop}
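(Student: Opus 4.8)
The plan is to prove both the direct-sum decomposition and the irreducibility of each $U_T$ by exploiting the $\theta^{(n)}$-weight basis from Proposition \ref{weight basis prop} together with the intertwiner-style operators appearing there.

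First, for the decomposition statement: by Proposition \ref{weight basis prop} the vectors $\{F_\tau : \tau \in \PSYT(\lambda)\}$ form a basis of $V_\lambda$, and the fibers of $\mathfrak{p}_\lambda : \PSYT(\lambda) \to \RYT(\lambda)$ partition $\PSYT(\lambda)$, so as a vector space $V_\lambda = \bigoplus_{T \in \RYT(\lambda)} U_T$ is immediate. It remains to check each $U_T$ is an $\sA_n$-submodule. Since $\sA_n$ is generated by $T_1,\dots,T_{n-1}$ and $\theta_1,\dots,\theta_n$, I must show each generator preserves $U_T$. The $\theta_i^{(n)}$ act diagonally on each $F_\tau$ (first bullet of Prop. \ref{weight basis prop}), so they certainly preserve $U_T$. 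For the $T_i$, I would use the fact that $tT_i^{-1}$ applied to $F_\tau$ is a linear combination of $F_\tau$ and $F_{s_i(\tau)}$ (when $s_i(\tau)$ is defined), reading this off from the third bullet of Prop. \ref{weight basis prop} and its analogue when $s_i(\tau) < \tau$; the crucial combinatorial point is that the operation $\tau \mapsto s_i(\tau)$ does not change $\mathfrak{p}_\lambda(\tau)$ — indeed $s_i$ only swaps which boxes carry the labels $j$ and $j+1$ while leaving the powers of $q$ in place in a way that leaves $\mathfrak{p}_\lambda$ fixed — and when $s_i(\tau)$ is undefined, $T_i$ acts on $F_\tau$ by a scalar ($1$ or $-t$, as in the SYT relations propagated through the $F$-basis). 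Hence $T_i(U_T) \subseteq U_T$, and since $T_i$ is invertible in $\sH_n$, $T_i^{-1}(U_T) \subseteq U_T$ as well; this gives that $U_T$ is an $\sA_n$-submodule.

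For irreducibility of $U_T$: the $\theta^{(n)}$-weights of the basis vectors $F_\tau$ with $\mathfrak{p}_\lambda(\tau) = T$ are distinct by Prop. \ref{weight basis prop}, so $U_T$ is a $\theta^{(n)}$-semisimple module with one-dimensional weight spaces. Given a nonzero submodule $N \subseteq U_T$, a weight-space argument shows $N$ contains at least one $F_\tau$ with $\mathfrak{p}_\lambda(\tau) = T$. I then need to show that from any single such $F_\tau$ one can reach every $F_{\tau'}$ with $\mathfrak{p}_\lambda(\tau') = T$ by repeatedly applying the operators $tT_i^{-1}$ (more precisely, the "shifted" operators $tT_i^{-1} + \frac{(t-1)q^{w_\tau(i+1)}t^{c_\tau(i+1)}}{q^{w_\tau(i)}t^{c_\tau(i)} - q^{w_\tau(i+1)}t^{c_\tau(i+1)}}$, which land exactly on $F_{s_i(\tau)}$, and are elements of $\sA_n$ since they are built from $T_i$ and the diagonalizable $\theta_j$). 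This reduces to a purely combinatorial connectivity statement: the set $\PSYT(\lambda;T)$ is connected under the moves $\tau \leftrightarrow s_i(\tau)$. I would establish this by arguing that any $\tau \in \PSYT(\lambda;T)$ can be transformed into the canonical element $\Min(T)$ (or $\Top(T)$) from Lemma \ref{locally maximal periodic tableau}: whenever $\tau \neq \Min(T)$, the partial order from Definition \ref{ordering and operations on tableaux defs} — restricted to $\PSYT(\lambda;T)$, where the $\Psi$-cover relation never applies since $\Psi$ changes $\mathfrak p_\lambda$ — guarantees a cover relation $\tau > s_i(\tau)$ with $\mathfrak p_\lambda(s_i(\tau)) = T$, giving a strictly decreasing chain down to the unique minimum $\Min(T)$; the $s_i$-moves are reversible, so any two elements of $\PSYT(\lambda;T)$ are connected. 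Care is needed to confirm that whenever $s_i(\tau)<\tau$ the denominator $q^{w_\tau(i)}t^{c_\tau(i)} - q^{w_\tau(i+1)}t^{c_\tau(i+1)}$ is nonzero (which follows from the distinctness of $\theta^{(n)}$-weights, since $q,t$ are algebraically independent) so the shifted operator is well-defined.

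The main obstacle I anticipate is the combinatorial connectivity argument — specifically, verifying that the partial order on $\PSYT(\lambda)$ from Definition \ref{ordering and operations on tableaux defs}, once the $\Psi$-relation is removed, restricts to a graded poset on each fiber $\PSYT(\lambda;T)$ with unique minimum $\Min(T)$, so that one can always descend via legal $s_i$-moves. This requires checking that the two non-$\Psi$ cover relations ($w_\tau(i) < w_\tau(i+1)$, and $w_\tau(i) = w_\tau(i+1)$ with $c_\tau(i) - c_\tau(i+1) > 1$) exactly capture when $s_i(\tau)$ is defined and larger, and that they do not "leave" the fiber — i.e. that $s_i$ preserves $\mathfrak p_\lambda$ — which is essentially built into the definition of $s_i$ (it only relabels, swapping $j \leftrightarrow j+1$, and transports the $q$-powers with the labels in a fiber-preserving way); the remaining work is a standard argument that a finite poset with the property that every non-minimal element covers something has a reachable minimum, combined with Lemma \ref{locally maximal periodic tableau} to identify that minimum as $\Min(T)$. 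Everything else — the submodule computation and the weight-space extraction — is routine once Proposition \ref{weight basis prop} is in hand.
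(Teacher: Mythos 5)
Your proposal is correct and follows exactly the route the paper indicates for this result (the paper omits the proof, but states that these structural facts are obtained from the $\theta^{(n)}$-weight basis of Proposition \ref{weight basis prop} together with DAHA/AHA intertwiner theory): the $\theta_i$ preserve each $U_T$ because the $F_\tau$ are weight vectors, the $T_i$ preserve each $U_T$ because $T_iF_\tau$ lies in the span of $F_\tau$ and $F_{s_i(\tau)}$ and $s_i$ fixes $\mathfrak{p}_\lambda$, and irreducibility comes from distinctness of weights plus connectivity of the fiber $\PSYT(\lambda;T)$ under $s_i$-moves.

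Two refinements are worth recording. First, the step you single out as the main obstacle is actually immediate from Lemma \ref{locally maximal periodic tableau}: in the partial order of Definition \ref{ordering and operations on tableaux defs}, an $s_i$-cover preserves the box-to-power assignment (hence the fiber), while a $\Psi$-cover strictly increases the total of the $q$-exponents; so any saturated chain between two elements of the same fiber consists solely of $s_i$-covers and never leaves the fiber. Since the lemma gives $\Min(T)\leq \tau$ for every $\tau\in\PSYT(\lambda;T)$, each such $\tau$ is joined to $\Min(T)$ by a chain of fiber-preserving $s_i$-covers, and connectivity follows with no further poset analysis. Second, two small precision points in the transport argument: the ``shifted operators'' $tT_i^{-1}+\frac{(t-1)q^{w_\tau(i+1)}t^{c_\tau(i+1)}}{q^{w_\tau(i)}t^{c_\tau(i)}-q^{w_\tau(i+1)}t^{c_\tau(i+1)}}$ are not elements of $\sA_n$ because the added scalar depends on $\tau$; either use the intertwiners $\varphi_i\in\sA_n$ from Definition \ref{first presentation of AHA}, or simply note that once $F_\tau$ lies in a submodule $N$, adding a scalar multiple of $F_\tau$ to $tT_i^{-1}F_\tau$ stays in $N$, which is all you need. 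Moreover, to move membership in $N$ \emph{downward} along a cover $\tau<s_i(\tau)$ you need the coefficient of $F_\tau$ in $tT_i^{-1}F_{s_i(\tau)}$ to be nonzero; this is slightly stronger than the nonvanishing of the denominator you mention, but it holds because along a cover the weight ratio $q^{w_\tau(i)-w_\tau(i+1)}t^{c_\tau(i)-c_\tau(i+1)}$ is never $t^{\pm 1}$ (either a nonzero power of $q$ occurs, or the powers agree and the content gap is at least $2$), so the two-sided intertwiner coefficients are both nonzero. With these adjustments your argument is complete.
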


 Using induction on the partial order defined over $\PSYT(\lambda)$ we obtain the following result. Recall Section \ref{defs and nots} for notation.

\begin{prop}
    For $T \in \RYT(\lambda)$, $F_{\Top(T)}$ has a triangular expansion of the form 
    $$F_{\Top(T)}= t^{-b_T}X^{\nu(T)}\otimes S(T) + \sum_{\beta \prec \nu(T)} X^{\beta} \otimes v_{\beta} $$ for some $v_{\beta} \in \lambda.$ Here $\prec$ denotes the Bruhat order on $\mathbb{Z}_{\geq 0}^n.$
\end{prop}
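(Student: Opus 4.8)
The plan is to induct on the partial order on $\PSYT(\lambda)$, using the recursive characterization of $F_{\Top(T)}$ coming from Proposition \ref{weight basis prop}. The key point is that $\Top(T)$ can be reached from the standard tableau $S(T) \in \SYT(\lambda)$ by a sequence of the covering moves $s_i$ and $\Psi$ that only increase in the partial order, so one builds $F_{\Top(T)}$ step by step starting from $F_{S(T)} = 1 \otimes S(T)$ and tracks how the leading monomial evolves. Concretely, I would first establish the \emph{base case}: when $T$ has all entries equal (or more precisely when $\nu(T)$ is already sorted and $\Top(T) = S(T)$ as an SYT, i.e.\ $T$ records the constant tableau $0$), we have $F_{\Top(T)} = 1 \otimes S(T) = X^{0} \otimes S(T)$ and $b_T = 0$, so the claim holds trivially; more generally the genuine base case is $T$ with all entries $0$, giving $\nu(T) = 0$ and $S(T)$ the column-reading SYT.

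Next I would analyze the two types of covering moves separately. For a move of type $s_i$ (applied when $w_\tau(i) < w_\tau(i+1)$ or $w_\tau(i) = w_\tau(i+1)$ with $c_\tau(i) - c_\tau(i+1) > 1$), the third bullet of Proposition \ref{weight basis prop} expresses $F_{s_i(\tau)}$ as $tT_i^{-1}F_\tau$ plus a scalar multiple of $F_\tau$ itself. Since $tT_i^{-1} = t T_i^{-1}$ acts on monomials by the Demazure-type / Hecke operator (using $X_{i+1} = tT_i^{-1} X_i T_i^{-1}$), one checks that multiplication by $tT_i^{-1}$ sends $X^\beta \otimes v$ to a combination of $X^{\beta}\otimes v'$ and $X^{s_i\beta}\otimes v''$ terms and does not increase the Bruhat order beyond $s_i$ acting on the exponent; the new leading term is $X^{s_i \nu}$ when $s_i$ genuinely sorts $\nu$ into a Bruhat-larger position, and one must verify the leading coefficient picks up exactly the predicted power of $t$. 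For the move $\Psi$, the fourth bullet gives $F_{\Psi(\tau)} = q^{w_1(\tau)} X_n \pi_n^{-1} F_\tau$; here $\pi_n^{-1}$ cyclically rotates exponents and $X_n$ raises the last exponent, and one tracks that this sends the leading monomial $X^{\nu}$ to $X^{\nu'}$ where $\nu'$ is the cyclic-shift-plus-increment, matching the change in $\nu(T)$ as $T \mapsto $ (the RYT of $\Psi(\tau)$), while the scalar $q^{w_1(\tau)} \cdot (\text{coefficient from } X_n\pi_n^{-1})$ accounts for the power of $t$ (there should be no new $q$ in the final answer since $t^{-b_T}$ has no $q$, so the $q$-powers must cancel — this needs care).

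The main obstacle I expect is the \emph{bookkeeping of the leading coefficient} $t^{-b_T}$: one must show that the accumulated scalars from the chain of moves $S(T) \to \cdots \to \Top(T)$ multiply to exactly $t^{-b_T}$ with $b_T = \sum_i \nu(T)_i(c_{S(T)}(i) + i - 1)$. The cleanest route is probably to prove the statement not by following an arbitrary saturated chain but by choosing a \emph{canonical} factorization of the passage from $S(T)$ to $\Top(T)$ — for instance realizing $\Top(T)$ via $\mathfrak{p}_\lambda$-fiber structure, noting $U_T \cong \rho_n^*(\lambda)$ as $\sA_n$-modules (Proposition \ref{Mackey decomposition}) and that $F_{\Top(T)}$ is characterized inside $U_T$ as the unique weight vector of its weight — and then computing the leading term directly from the product of intertwiners $\varphi_i$ and the operators $q^{w}X_n\pi_n^{-1}$ that carry $1 \otimes S(T)$ to $F_{\Top(T)}$. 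A secondary obstacle is verifying the triangularity (that all other terms $X^\beta \otimes v_\beta$ satisfy $\beta \prec \nu(T)$ in Bruhat order): this follows because each operator $tT_i^{-1}$ and $X_n\pi_n^{-1}$ is "triangular" with respect to Bruhat order on exponents, but one must confirm the inductive hypothesis's error terms stay below $\nu(T)$ after applying these operators, which is where the compatibility of Bruhat order with the moves $s_i$ and $\Psi$ (encoded in the definition of the partial order on $\PSYT(\lambda)$) gets used.
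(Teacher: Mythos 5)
Your plan follows the same route the paper indicates for this proposition: induction along the partial order on $\PSYT(\lambda)$, starting from $F_{\sigma} = 1 \otimes \sigma$ for standard $\sigma$ and tracking the leading term through the straightening steps $tT_i^{-1} + \mathrm{scalar}$ and the raising steps $q^{w_{\tau}(1)}X_n\pi_n^{-1}$ of Prop.~\ref{weight basis prop}. The points you flag as needing care are indeed the computational core but they do go through as you predict — in particular the $q$-powers cancel because the prefactor $q^{w_{\tau}(1)}$ offsets the eigenvalue of $\theta_1^{-1}$ occurring inside $\pi_n^{-1} = T_{n-1}\cdots T_1\theta_1^{-1}t^{-(n-1)}$, leaving only powers of $t$ that accumulate to $t^{-b_T}$ — so your approach is sound and essentially the paper's.
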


\subsection{Connecting Maps Between $V_{\lambda^{(n)}}$}

In order to build the inverse systems which we will use to define Muraghan-type modules for $\sE^{+}$, we need to consider the following maps.

\begin{defn}\label{connecting maps def}
    Let $\lambda \in \Par$. For $n \geq n_{\lambda}$ define $\Phi^{(n)}_{\lambda}: V_{\lambda^{(n+1)}} \rightarrow V_{\lambda^{(n)}}$ as the $\mathbb{Q}(q,t)$-linear map determined by 
    $$\Phi^{(n)}_{\lambda}(X^{\alpha}\otimes v) = \mathbbm{1}(\alpha_{n+1} = 0) X_1^{\alpha_1}\cdots X_n^{\alpha_n}\otimes \mathfrak{q}^{(n)}_{\lambda}(v).$$
\end{defn}

The next proposition is the most crucial step in proving the main theorem of this paper. Its proof relies heavily on the use of the re-orientated Cherednik operators $\theta_i$ and their spectral analysis.

\begin{prop}\label{stability for locally maximal psyt}
    Let $T \in \RYT(\lambda^{(n)})$ and $T' \in \RYT(\lambda^{(n+1)})$ be such that $T(\square) = T'(\square)$ for $ \square \in \lambda^{(n)}$ and $T'(\square_0) = 0$ for $\square_0 \in \lambda^{(n+1)}/\lambda^{(n)}.$
    Then $$\Phi^{(n)}_{\lambda}(F_{\Top(T')})= F_{\Top(T)}.$$
\end{prop}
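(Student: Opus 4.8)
The plan is to understand precisely how $\Top(T)$ is built out of $\Top(T')$ by removing the box $\square_0$, and then to verify that the connecting map $\Phi^{(n)}_\lambda$ is compatible with this removal via the four-part characterization of the $F_\tau$ in Proposition \ref{weight basis prop}. First I would analyze the combinatorics: since $T'(\square_0) = 0$ and $\square_0$ is an addable corner of $\lambda^{(n)}$ sitting in the first row, in the periodic tableau $\Top(T')$ the box $\square_0$ must receive the label $mq^0$ where $m = |\lambda^{(n+1)}| = n+1$ — indeed, among all boxes of $T'$ with value $0$, the box $\square_0$ is the last one in the column-standard order (it is the rightmost box of the first row), so $S(T')$ assigns it the largest label $n+1$, and since $\Top(T')$ is the maximal element of $\PSYT(\lambda^{(n+1)};T')$ it realizes $S(T')$ with periodicity exponents matching $T'$; hence $\Top(T')(\square_0) = (n+1)q^0$. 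I would then check that deleting this box and restricting gives exactly $\Top(T)$: the restriction $S(T')|_{\lambda^{(n)}}$ equals $S(T)$ (removing the maximal label from a standard tableau), the periodicity function restricts to $T$, and maximality is inherited, so $\Top(T') |_{\lambda^{(n)}} = \Top(T)$ as periodic tableaux.

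Next I would compute $\Phi^{(n)}_\lambda(F_{\Top(T')})$ directly. Because $\Top(T')(\square_0) = (n+1)q^0$, we have $w_{\Top(T')}(n+1) = 0$, and one sees that the label $n+1$ can be ``peeled off'' using the $\Psi$ and $s_i$ relations; more efficiently, I would argue by downward induction on the partial order on $\PSYT(\lambda^{(n+1)};T')$, or better, use the triangular expansion $F_{\Top(T')} = t^{-b_{T'}} X^{\nu(T')}\otimes S(T') + \sum_{\beta \prec \nu(T')} X^\beta \otimes v_\beta$. Since $\nu(T')$ lists the values of $T'$ in decreasing order and $T'(\square_0)=0$ with the value of $\Top(T')$ at $\square_0$ being the largest label, the exponent $\nu(T')$ has its last coordinate (corresponding to position $n+1$) equal to $0$; and every $\beta \prec \nu(T')$ in Bruhat order also has $\beta_{n+1} = 0$ because $\nu(T')_{n+1}=0$ is already minimal. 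Therefore $\Phi^{(n)}_\lambda$ kills nothing on the nose — it just drops the (trivial) last tensor slot and applies $\mathfrak{q}^{(n)}_\lambda$. Combined with $b_{T'} = b_T$ (the extra term in the sum defining $b_T$ is $\nu(T')_{n+1}(c_{S(T')}(n+1) + n) = 0$) and $\mathfrak{q}^{(n)}_\lambda(S(T')) = S(T)$, this yields a triangular expansion in $V_{\lambda^{(n)}}$ with the same leading term as $F_{\Top(T)}$. To upgrade ``same leading term'' to equality I would show $\Phi^{(n)}_\lambda(F_{\Top(T')})$ is a $\theta^{(n)}$-weight vector of the correct weight: using that $\Phi^{(n)}_\lambda$ intertwines the $\sA_n$-actions appropriately (which follows from Definition \ref{connecting maps def}, the fact that $\mathfrak{q}^{(n)}_\lambda$ is an $\sH_n$-map, and compatibility of the $\theta_i$ with restriction), the image is an eigenvector for $\theta_i^{(n)}$ with eigenvalue $q^{w_{\Top(T)}(i)} t^{c_{\Top(T)}(i)}$ for $1 \le i \le n$; since $V_{\lambda^{(n)}}$ has a basis of $\theta^{(n)}$-weight vectors with \emph{distinct} weights, the image must be a scalar multiple of $F_{\Top(T)}$, and matching leading coefficients forces the scalar to be $1$.

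The key verification that $\Phi^{(n)}_\lambda$ behaves as an intertwiner for the relevant operators is the technical heart: I need $\Phi^{(n)}_\lambda \circ \theta_i^{(n+1)} = \theta_i^{(n)} \circ \Phi^{(n)}_\lambda$ on the relevant vectors (or at least enough of this to pin down the weight), and I need $\Phi^{(n)}_\lambda(F_{\Psi(\tau)}) $ to relate correctly to $X_n \pi_n^{-1}$ in rank $n$ — this is where, as the paper flags, the re-oriented Cherednik operators $\theta_i$ matter, because the spectrum $q^{w} t^{c}$ must line up across ranks without the shift that the $\xi_i$ conventions would introduce. The main obstacle I anticipate is precisely checking this compatibility of $\Phi^{(n)}_\lambda$ with the affine/Cherednik generators — the monomial and $T_i$ parts are straightforward from Definition \ref{connecting maps def}, but controlling the $\pi_n$ and $X_n$ action under the map (the relation $\pi_n X_n = q X_1 \pi_n$ and the fourth bullet of Proposition \ref{weight basis prop}) requires care that the condition $\mathbbm{1}(\alpha_{n+1}=0)$ interacts correctly with $\pi_{n+1}^{-1}$, which cyclically shifts the variables. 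I would isolate this as a lemma: $\Phi^{(n)}_\lambda$ restricted to the span of $\{X^\alpha \otimes v : \alpha_{n+1} = 0\}$ intertwines the $\sA_n \ltimes \mathbb{Q}(q,t)[X_1,\dots,X_n]$-actions, from which the weight-vector claim and hence the proposition follow.
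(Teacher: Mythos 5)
Your combinatorial reductions are correct and are worth keeping: since $\square_0$ is the last box in the column-standard order among the boxes of $T'$ with value $0$, one gets $S(T')(\square_0)=n+1$, hence $\Top(T')(\square_0)=(n+1)q^{0}$, $\Top(T')|_{\lambda^{(n)}}=\Top(T)$, $S(T')|_{\lambda^{(n)}}=S(T)$, and $b_{T'}=b_T$; moreover the coefficient of $X^{\nu(T)}\otimes S(T)$ in $\Phi^{(n)}_{\lambda}(F_{\Top(T')})$ can only come from the leading term, because any $\beta$ with $\beta_{n+1}=0$ whose first $n$ entries are $\nu(T)$ is $\nu(T')$ itself. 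Note, however, that your intermediate claim that every $\beta\prec\nu(T')$ has $\beta_{n+1}=0$ is unjustified (Bruhat-lower terms include rearrangements and dominance-smaller compositions whose last entry can be positive), so "$\Phi^{(n)}_{\lambda}$ kills nothing" is an overstatement; fortunately your argument does not need it, only the leading-coefficient extraction just described.

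The genuine gap is the step you flag and then assume: that $\Phi^{(n)}_{\lambda}(F_{\Top(T')})$ is a joint $\theta^{(n)}$-weight vector of the predicted weight. Your proposed lemma --- that $\Phi^{(n)}_{\lambda}$ intertwines the full $\sA_n$-actions on the span of $\{X^{\alpha}\otimes v:\alpha_{n+1}=0\}$ --- is exactly where the entire difficulty of the proposition lives, and it is not a routine check: that span is not stable under $\theta_i^{(n+1)}$, since $\theta_i^{(n+1)}$ is built from $\pi_{n+1}$, which cycles $X_n\mapsto X_{n+1}\mapsto qX_1$, so applying $\theta_i^{(n+1)}$ to $X^{\alpha}\otimes v$ creates terms of positive $X_{n+1}$-degree (killed by $\Phi^{(n)}_{\lambda}$) as well as terms $X^{\beta}\otimes w$ with $\beta_{n+1}=0$ whose tableau components survive $\mathfrak{q}^{(n)}_{\lambda}$ even when $\mathfrak{q}^{(n)}_{\lambda}(v)=0$; nothing in your proposal controls these contributions, and it is precisely this bookkeeping (the "spectral analysis of the re-oriented Cherednik operators" the paper alludes to, its proof being omitted in this version) that constitutes the actual proof. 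The paper itself only records the symmetrized, corrected intertwining of Cor. \ref{connecting maps and symmetrization of thetas}, whose correction terms differ by $t^{\ell c(\square_0)}$ --- the would-be eigenvalue contribution of $\theta_{n+1}$ --- which is a warning that an individual-operator intertwining cannot simply be asserted in the generality you state it. As written you have reduced the proposition to an unproven lemma that carries all of its content; to close the gap you should either prove that intertwining at least on the vectors actually reached from $F_{\Top(T')}$, or establish weight-preservation for this specific vector directly, e.g. by expanding $F_{\Top(T')}$ via the recursions of Prop. \ref{weight basis prop} and tracking the image under $\Phi^{(n)}_{\lambda}$ term by term.
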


The maps $\Phi^{(n)}_{\lambda}$ possess another remarkable property regarding the action of the Macdonald elements which will be required later in this paper.

\begin{cor}\label{connecting maps and symmetrization of thetas}
    For all $\ell > 0$ and $n \geq n_{\lambda}$, 
    $$\Phi^{(n)}_{\lambda}\left( P_{(0,\ell)}^{(n+1)} - \sum_{\square \in \lambda^{(n+1)}} t^{\ell c(\square)}  \right) = \left( P_{(0,\ell)}^{(n)} - \sum_{\square \in \lambda^{(n)}} t^{\ell c(\square)} \right) \Phi^{(n)}_{\lambda}.$$
\end{cor}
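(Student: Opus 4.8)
The plan is to deduce this corollary directly from Proposition \ref{stability for locally maximal psyt}, exploiting the fact that $P_{(0,\ell)}^{(n)}$ acts through the symmetrized power sum $\sum_{i=1}^n (\theta_i^{(n)})^\ell$ and that the $\theta^{(n)}$-weight of $F_{\Top(T)}$ is recorded by the pair $(\lambda^{(n)}, T)$. First I would observe that the family $\{F_{\Top(T')} : T' \in \RYT(\lambda^{(n+1)}),\ T'(\square_0) = 0\}$ maps under $\Phi^{(n)}_\lambda$ onto the family $\{F_{\Top(T)} : T \in \RYT(\lambda^{(n)})\}$, and that this latter family — more precisely, together with the full weight basis $\{F_\tau : \tau \in \PSYT(\lambda^{(n)})\}$ — is enough to pin down the equality of operators, since $\epsilon^{(n)} V_{\lambda^{(n)}}$ is spanned by symmetrizations of these weight vectors. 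Actually the cleanest route is: since $P_{(0,\ell)}^{(n)}$ is built from $\sum_i (\theta_i^{(n)})^\ell$, which acts diagonally on \emph{every} $F_\tau$ by $\sum_i q^{\ell w_\tau(i)} t^{\ell c_\tau(i)}$ (Prop. \ref{weight basis prop}), I would first prove the intertwining relation at the level of $\sum_i (\theta_i)^\ell$ on the whole module $V_{\lambda^{(n+1)}}$, then pass to the $\epsilon$-symmetrized statement.

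The key computation is the following. For $\tau' \in \PSYT(\lambda^{(n+1)})$, the operator $\Phi^{(n)}_\lambda$ either kills $F_{\tau'}$ or sends it to a scalar multiple of some $F_\tau$ with $\tau \in \PSYT(\lambda^{(n)})$; in the surviving case, I need to compare the weight of $\tau'$ with the weight of $\tau$. Here is where the hypothesis about $\square_0$ receiving label $0$ matters: when $\Phi^{(n)}_\lambda(F_{\tau'})$ survives, the box $\square_0$ in $\lambda^{(n+1)}/\lambda^{(n)}$ carries the label $(n+1)q^0$, contributing $q^{0}t^{c(\square_0)}$ to the $(\theta_{n+1})^\ell$-eigenvalue, i.e.\ $t^{\ell c(\square_0)}$; removing that box and restricting to $\lambda^{(n)}$ exactly removes the summand $t^{\ell c(\square_0)}$ from $\sum_i q^{\ell w}t^{\ell c}$ while leaving all other summands fixed (the restriction of $\tau'$ to $\lambda^{(n)}$ is a PSYT of $\lambda^{(n)}$ with the same weights $w, c$ on those boxes). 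Since $\sum_{\square \in \lambda^{(n+1)}} t^{\ell c(\square)} - \sum_{\square \in \lambda^{(n)}} t^{\ell c(\square)} = t^{\ell c(\square_0)}$, the shift in the scalar eigenvalue is precisely absorbed by the constant terms $\sum_{\square} t^{\ell c(\square)}$ on the two sides. I would verify this first on the weight basis, which reduces everything to the bookkeeping above, and then transfer it through the definition $P_{(0,\ell)}^{(n)} = \epsilon^{(n)}(\sum_i \theta_i^\ell)\epsilon^{(n)}$ using that $\Phi^{(n)}_\lambda$ is compatible with the Hecke symmetrizers (which should follow from the $\sH_n \subset \sH_{n+1}$ equivariance implicit in the module maps $\mathfrak{q}^{(n)}_\lambda$ and the definition of $\Phi^{(n)}_\lambda$).

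The main obstacle I anticipate is not the weight arithmetic but the passage from $\sum_i \theta_i^\ell$ to $P_{(0,\ell)}^{(n)} = \epsilon^{(n)} (\sum_i \theta_i^\ell) \epsilon^{(n)}$: one must check that $\Phi^{(n)}_\lambda$ intertwines the idempotents $\epsilon^{(n+1)}$ and $\epsilon^{(n)}$ appropriately, or equivalently that it restricts to a map of spherical subspaces commuting with the relevant Hecke actions. This requires knowing how $\Phi^{(n)}_\lambda$ interacts with $T_1,\dots,T_{n-1}$ (it should be $\sH_n$-equivariant, since $\mathfrak{q}^{(n)}_\lambda$ is an $\sH_n$-module map by the lemma preceding Definition \ref{first presentation of AHA} and $\Phi^{(n)}_\lambda$ acts trivially on the $X_1,\dots,X_n$ factors that $T_1,\dots,T_{n-1}$ leave in the span of $X_1,\dots,X_n$-monomials), together with a short argument that $\epsilon^{(n+1)}$ restricted to the relevant subspace agrees with $\epsilon^{(n)}$ under $\Phi^{(n)}_\lambda$. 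Once that compatibility is in hand, sandwiching the already-established identity $\Phi^{(n)}_\lambda \circ (\sum_i \theta_i^\ell - \sum_{\square \in \lambda^{(n+1)}} t^{\ell c(\square)}) = (\sum_i \theta_i^\ell - \sum_{\square \in \lambda^{(n)}} t^{\ell c(\square)}) \circ \Phi^{(n)}_\lambda$ between the idempotents and using that the scalar constants commute with everything finishes the proof.
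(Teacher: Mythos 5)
Your overall architecture is reasonable, and the eigenvalue arithmetic at its core is right: if one knows how $\Phi^{(n)}_{\lambda}$ acts on the $\theta$-weight basis, then the extra box $\square_0$ carries $(n+1)q^0$, contributes $t^{\ell c(\square_0)}$ to the eigenvalue of $\sum_i \theta_i^{\ell}$, and this is exactly cancelled by the difference of the constants $\sum_{\square} t^{\ell c(\square)}$. But the step you call the ``key computation'' is precisely the hard content of the paper, and you assert it rather than prove it: the claim that for \emph{every} $\tau' \in \PSYT(\lambda^{(n+1)})$ the map $\Phi^{(n)}_{\lambda}$ either kills $F_{\tau'}$ or sends it to a scalar multiple of $F_{\tau'|_{\lambda^{(n)}}}$, with survival only when $\square_0$ is labelled $(n+1)q^0$. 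Nothing stated in the paper gives this: Prop.~\ref{stability for locally maximal psyt} covers only $F_{\Top(T')}$ for $T'$ extending $T$ by a $0$ in $\square_0$, and the author singles out even that special case as the crucial step, proved by a spectral analysis of the $\theta_i$. The map $\Phi^{(n)}_{\lambda}$ is defined only on the monomial basis and is not a priori compatible with the $\theta_i$; moreover the recursions of Prop.~\ref{weight basis prop} that generate the remaining $F_{\tau'}$ from the standard ones involve $T_n$, $X_{n+1}$ and $\pi_{n+1}^{-1}$, none of which commute with $\Phi^{(n)}_{\lambda}$, so one cannot simply transport Prop.~\ref{stability for locally maximal psyt} along the intertwiners. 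In effect your proposal assumes a strengthened form of the paper's hardest lemma and reduces the corollary to bookkeeping; the bookkeeping is fine, but the load-bearing claim is unproved.

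The second gap is the idempotent step, which as phrased would fail. It is not true that $\Phi^{(n)}_{\lambda}$ intertwines $\epsilon^{(n+1)}$ with $\epsilon^{(n)}$ on $V_{\lambda^{(n+1)}}$: already for $\lambda=\emptyset$, $n=1$, with $\tau_0$ the unique standard tableau of shape $(2)$, one has $\Phi^{(1)}_{\emptyset}(X_2\otimes\tau_0)=0$ while $\epsilon^{(2)}(X_2\otimes\tau_0)=\tfrac{t}{1+t}(X_1+X_2)\otimes\tau_0$ is not killed by $\Phi^{(1)}_{\emptyset}$; since $(X_1+X_2)\otimes\tau_0$ is a $P^{(2)}_{0,\ell}$-eigenvector with eigenvalue $q^{\ell}+t^{\ell}$, the displayed identity itself fails on the vector $X_2\otimes\tau_0$, so it can only be asserted on the spherical subspace $W_{\lambda^{(n+1)}}$ (which is all that is used in Thm.~\ref{main theorem}). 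Your plan ``prove the $\sum_i\theta_i^{\ell}$ identity on all of $V$, then sandwich between idempotents'' therefore needs the following repair, which is different from an intertwining of idempotents: (i) $\Phi^{(n)}_{\lambda}$ commutes with $T_1,\dots,T_{n-1}$, hence maps $W_{\lambda^{(n+1)}}$ into $W_{\lambda^{(n)}}$; (ii) $\sum_i\theta_i^{\ell}$ is a symmetric polynomial in the $\theta$'s and commutes with the finite Hecke algebra (the fact underlying Prop.~\ref{simple spectrum}), so on spherical vectors $P^{(m)}_{0,\ell}$ acts simply as $\sum_{i\le m}\theta_i^{\ell}$; the theta-level identity restricted to $W_{\lambda^{(n+1)}}$ then yields the corollary. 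Alternatively one can avoid the full weight-basis claim by verifying the identity on the basis $\{P_{T'}\}$ of $W_{\lambda^{(n+1)}}$ via Prop.~\ref{simple spectrum} and Cor.~\ref{stability for MacD poly}, but that route still requires showing $\Phi^{(n)}_{\lambda}(P_{T'})=0$ whenever $T'(\square_0)\neq 0$ --- a statement of exactly the kind you left unjustified.
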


\section{Positive EHA Representations from Young Diagrams}\label{Positive EHA Representations from Young Diagrams}

In this section we build $\sE^{+}$-modules using the maps $\Phi^{(n)}_{\lambda}$ and the stability of the $F_{\tau}$ basis already described.

\subsection{The $\sD_n^{\text{sph}}$-modules $W_{\lambda^{(n)}}$}
Here we consider the spherical subspaces of the $V_{\lambda}$ modules.

 \begin{defn}\label{sph DAHA reps defn}
     For $\lambda \in \Par$ with $|\lambda| = n$ define the $\sD_n^{\text{sph}}$-module $W_{\lambda}:= \epsilon^{(n)}(V_{\lambda}).$
 \end{defn}

 We will need the following combinatorial description of the AHA submodules of $V_{\lambda}$ which contain a nonzero $T_i$-invariant vector.

 \begin{prop}\label{sym AHA submodules}
 For $\lambda \in \Par$ with $|\lambda| = n$ and $T \in \RYT(\lambda)$, 
 $$\mathrm{dim}_{\mathbb{Q}(q,t)} \epsilon^{(n)}(U_T) =\begin{cases}
    1 & T \in \RSSYT(\lambda)\\
    0 & T \notin \RSSYT(\lambda).
     \end{cases}  $$
 \end{prop}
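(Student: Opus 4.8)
The plan is to analyze the action of the trivial idempotent $\epsilon^{(n)}$ on each irreducible $\sA_n$-submodule $U_T$ using the $\theta^{(n)}$-weight basis $\{F_\tau : \mathfrak{p}_\lambda(\tau) = T\}$ from Proposition \ref{weight basis prop}. The dimension of $\epsilon^{(n)}(U_T)$ equals the dimension of the space of $\sH_n$-invariants in $U_T$ (where $\sH_n$ acts via $\rho_n$), since $\epsilon^{(n)}$ is the projector onto the trivial $\sH_n$-isotypic component and $\epsilon^{(n)}(U_T)$ is exactly that component. So the task reduces to computing $\dim (U_T)^{\sH_n}$, i.e. the multiplicity of the trivial $\sH_n$-representation inside $\Res^{\sA_n}_{\sH_n} U_T$. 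Since $U_T$ is irreducible as an $\sA_n$-module with one-dimensional $\theta^{(n)}$-weight spaces, its restriction to $\sH_n$ is some direct sum of Specht modules $S^\mu$; I want to pin down exactly when $S^{(n)}$ (the trivial module) appears, and show it appears with multiplicity one precisely when $T \in \RSSYT(\lambda)$.

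The key computational step is to identify $\Res^{\sA_n}_{\sH_n} U_T$ explicitly. Using Definition \ref{decomposing RYT into SYT and partition}, I would recall that $T$ is recovered from the pair $(S(T), \nu(T))$ and that $\mathfrak{S}_{\mu(T)}$ is the stabilizer of $\Min(T)$. The claim I want is that $\Res^{\sA_n}_{\sH_n} U_T \cong \Ind_{\sH_{\mu(T)}}^{\sH_n} \mathbf{1}$, the permutation module on cosets of the parabolic $\sH_{\mu(T)}$ — this is natural because the $\theta^{(n)}$-weights occurring in $U_T$ are exactly the $\mathfrak{S}_n$-orbit of the weight of $\Min(T)$, and the weights fixed by $s_i$ (equal $q$-power and equal content on adjacent labels) are governed by $\mu(T)$. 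Granting this, Frobenius reciprocity gives $\dim (U_T)^{\sH_n} = \dim \mathrm{Hom}_{\sH_n}(\Ind_{\sH_{\mu(T)}}^{\sH_n}\mathbf{1}, \mathbf{1}) = \dim \mathrm{Hom}_{\sH_{\mu(T)}}(\mathbf{1},\mathbf{1}) = 1$, so $\epsilon^{(n)}(U_T)$ is \emph{always} at most one-dimensional, and exactly one-dimensional — wait, that would contradict the statement. So the real content must be that $\epsilon^{(n)}(U_T)$ can be zero, which means the induced-module picture is too crude: the subtlety is that $U_T$ need not be the full permutation module but only a submodule/subquotient, and the trivial constituent survives only under a positivity/genericity condition on the weights, namely that the strict-decrease-down-columns condition defining $\RSSYT$ holds.

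Concretely, I would argue as follows. Form the symmetrizer applied to a generating weight vector, say $\epsilon^{(n)}(F_{\Min(T)})$, and expand it in the $F_\tau$ basis using the intertwiner/Knop–Sahi relations of Proposition \ref{weight basis prop} (the $tT_i^{-1}$ recursions). The symmetrized vector is nonzero iff none of the relevant denominators $q^{w_\tau(i)}t^{c_\tau(i)} - q^{w_\tau(i+1)}t^{c_\tau(i+1)}$ that appear force a vanishing, and iff the column-strictness obstruction doesn't collapse the symmetrization — precisely, if two boxes $\square_1$ above $\square_2$ in a column of $\lambda$ have $T(\square_1) = T(\square_2)$, then the corresponding $q$-powers in $\Min(T)$ agree while the contents differ by $1$, and $T_i$ acts on that pair of labels (which sit in the same column of $S(T)$) by $-t$ up to lower-order terms, forcing the trivial isotypic projection to kill $U_T$. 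Conversely, when $T \in \RSSYT(\lambda)$ all column entries strictly decrease, no such obstruction occurs, and $\epsilon^{(n)}(F_{\Min(T)}) \neq 0$, giving dimension exactly $1$ (the upper bound coming from the Frobenius-reciprocity argument above applied to the appropriate parabolic).

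The main obstacle I anticipate is the converse direction — showing $\epsilon^{(n)}(U_T) = 0$ when $T \notin \RSSYT(\lambda)$ — because it requires showing that \emph{every} vector in $U_T$, not just a chosen weight vector, is annihilated by $\epsilon^{(n)}$; equivalently that no $\sH_n$-constituent of $U_T$ is trivial. The clean way to handle this is to locate a single $s_i$ such that, for the unique (up to scalar) candidate symmetric vector, the pair of boxes carrying labels $i, i+1$ necessarily lies in a common column of the underlying standard tableau structure, so that $T_i$ acts as $-t$ on a spanning set — forcing any $T_i$-invariant (hence any $\sH_n$-invariant) vector to be zero. Carrying this out will need a careful bookkeeping of how the column-equality $T(\square_1) = T(\square_2)$ propagates through $S(T)$ and $\Min(T)$, and this is where the definitions of $S(T)$, $\nu(T)$, and $\mu(T)$ in Definition \ref{decomposing RYT into SYT and partition} do the real work.
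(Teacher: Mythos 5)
Your reduction to computing the multiplicity of the trivial $\sH_n$-constituent of $U_T$ is fine, but the structural claim you base everything on is false: $\Res^{\sA_n}_{\sH_n} U_T$ is \emph{not} isomorphic to the permutation module $\Ind_{\sH_{\mu(T)}}^{\sH_n}\mathbf{1}$, and the $\theta^{(n)}$-weights of $U_T$ are \emph{not} the full $\mathfrak{S}_n$-orbit of the weight of $\Min(T)$. Already for $\lambda=(1,1)$ and $T$ identically $0$ the class $\PSYT(\lambda;T)$ is a single tableau, so $U_T$ is one-dimensional (the sign module, on which $\epsilon^{(2)}$ vanishes), while $\mu(T)=(1,1)$ and your claimed permutation module is two-dimensional; in general $\dim U_T=|\PSYT(\lambda;T)|$, which is strictly smaller than $n!/\mu(T)!$ whenever some level set of $T$ has two boxes in a column. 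You noticed the resulting contradiction and retreated to ``$U_T$ is a submodule/subquotient,'' but then your upper bound ``$\dim\epsilon^{(n)}(U_T)\le 1$ by Frobenius reciprocity for the appropriate parabolic'' is unsupported: to run that argument you would need to prove that $U_T$ is cyclic as an $\sH_n$-module, generated by a vector (e.g.\ $F_{\Min(T)}$) on which the parabolic generators act by $+1$, so that $U_T$ is a \emph{quotient} of the permutation module. That cyclicity is true but is exactly the kind of statement your sketch never establishes; the cleaner route is to symmetrize the recursions of Prop.~\ref{weight basis prop}: since $\epsilon^{(n)}T_i^{-1}=\epsilon^{(n)}$, applying $\epsilon^{(n)}$ to $F_{s_i(\tau)}=(tT_i^{-1}+c)F_\tau$ shows every $\epsilon^{(n)}F_\tau$, $\tau\in\PSYT(\lambda;T)$, is a scalar multiple of $\epsilon^{(n)}F_{\Min(T)}$, and one checks those scalars $\frac{t\alpha-\beta}{\alpha-\beta}$ (with $\alpha,\beta$ the relevant eigenvalues) are nonzero along increasing chains. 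That single proportionality statement gives the uniform bound $\dim\epsilon^{(n)}(U_T)\le 1$ and sets up both directions.

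Your mechanism for the vanishing direction is also not viable as stated: there is in general no single $s_i$ with $T_i$ acting by $-t$ ``on a spanning set'' of $U_T$ (take a level set of $T$ equal to a $2\times 2$ block), so you cannot kill all of $\epsilon^{(n)}(U_T)$ that way. What works, given the proportionality above, is to exhibit \emph{one} $\tau\in\PSYT(\lambda;T)$ in which two consecutive labels $i,i+1$ with the same $q$-power occupy vertically adjacent boxes (such a $\tau$ exists precisely when some level set of $T$ contains two boxes in one column, i.e.\ $T\notin\RSSYT(\lambda)$); for that $\tau$ one has $T_iF_\tau=-tF_\tau$, hence $\epsilon^{(n)}F_\tau=0$, and the nonvanishing of the connecting scalars then forces $\epsilon^{(n)}F_{\Min(T)}=0$ and so $\epsilon^{(n)}(U_T)=0$. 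Finally, the nonvanishing direction for $T\in\RSSYT(\lambda)$ is only gestured at in your proposal; it needs an actual argument, e.g.\ a triangularity/leading-coefficient computation showing the coefficient of $F_{\Top(T)}$ (or $F_{\Min(T)}$) in the symmetrized vector is nonzero, which is in effect the content of the explicit expansion in Prop.~\ref{expansion of sym into nonsym}. So the overall strategy (symmetrize a weight vector, find the column obstruction) points in the right direction, but the permutation-module identification is wrong, the $\le 1$ bound is unproven as written, and the ``single $s_i$ kills everything'' step would fail without the proportionality argument.
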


 We define the vv. symmetric Macdonald polynomials in the following way. These will align up to a scalar with those in \cite{DL_2011}.

\begin{defn}\label{Macdonald polynomial def}
    Let $T \in \RSSYT(\lambda).$ Define $P_{T} \in \epsilon^{(n)}(U_T)$ to be the unique element of the form 
    $$P_T = F_{\Top(T)} + \sum_{y\in \PSYT(\lambda;T)\setminus \{\Top(T)\}} \kappa_{y}F_y.$$
\end{defn}

We can now use Prop. \ref{weight basis prop} and Prop. \ref{stability for locally maximal psyt} to prove the following results for the $P_T.$ 

\begin{prop}\label{expansion of sym into nonsym}
    For all $T\in \RSSYT(\lambda)$, 
    $$P_T = \sum_{\tau \in \PSYT(\lambda;T)} \prod_{(\square_1,\square_2) \in \Inv(\tau)} \left( \frac{q^{T(\square_1)}t^{c(\square_1) +1} - q^{T(\square_2)}t^{c(\square_2) }}{q^{T(\square_1)}t^{c(\square_1)} - q^{T(\square_2)}t^{c(\square_2)}} \right) F_{\tau}.$$
\end{prop}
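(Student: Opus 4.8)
The plan is to prove the claimed expansion of $P_T$ into the non-symmetric basis $\{F_\tau\}$ by a downward induction on the partial order on $\PSYT(\lambda; T)$, starting from the top element $\Top(T)$ and working down via the cover relations described in Definition \ref{ordering and operations on tableaux defs}. The leading term $F_{\Top(T)}$ appears with coefficient $1$ because $\Inv(\Top(T))$ is empty: $\Top(T)$ is the tableau where, within each group of boxes sharing the same $T$-value, the labels $j$ increase in column-standard order, so there are no pairs $(\square_1, \square_2)$ with $S(T)(\square_1) < S(T)(\square_2)$ and $i > j$ — this matches the definition of $P_T$ in Definition \ref{Macdonald polynomial def}. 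So the product over an empty inversion set is $1$, as required.

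The engine of the induction is the third bullet of Proposition \ref{weight basis prop}: when $s_i(\tau) > \tau$ we have
$$
\left( tT_i^{-1} + \frac{(t-1)q^{w_\tau(i+1)}t^{c_\tau(i+1)}}{q^{w_\tau(i)}t^{c_\tau(i)} - q^{w_\tau(i+1)}t^{c_\tau(i+1)}} \right) F_\tau = F_{s_i(\tau)}.
$$
Since $P_T$ lies in $\epsilon^{(n)}(U_T)$ and hence is $T_i$-invariant, applying $tT_i^{-1}$ (note $T_i^{-1}$ acts as the identity on $T_i$-invariants, up to the scalar from the quadratic relation $(T_i-1)(T_i+t)=0$, so $tT_i^{-1}$ fixes $T_i$-invariant vectors) to the expansion $P_T = \sum_\tau \kappa_\tau F_\tau$ and comparing coefficients of $F_{s_i(\tau)}$ versus $F_\tau$ yields a recursion relating $\kappa_{s_i(\tau)}$ to $\kappa_\tau$. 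Concretely, one extracts from the above intertwiner identity (together with the inverse relation expressing $F_\tau$ back in terms of $F_{s_i(\tau)}$) the exact ratio $\kappa_{s_i(\tau)}/\kappa_\tau$, which should come out to be the single factor
$$
\frac{q^{w_\tau(i)}t^{c_\tau(i)+1} - q^{w_\tau(i+1)}t^{c_\tau(i+1)}}{q^{w_\tau(i)}t^{c_\tau(i)} - q^{w_\tau(i+1)}t^{c_\tau(i+1)}}
$$
corresponding precisely to the one new inversion pair created by the move $\tau \mapsto s_i(\tau)$. Then I would check that as one descends from $\Top(T)$ to an arbitrary $\tau$, the set $\Inv(\tau)$ grows by exactly the inversion pairs corresponding to the sequence of $s_i$-moves applied, so the accumulated product of these ratios is exactly $\prod_{(\square_1,\square_2) \in \Inv(\tau)} \bigl( \tfrac{q^{T(\square_1)}t^{c(\square_1)+1} - q^{T(\square_2)}t^{c(\square_2)}}{q^{T(\square_1)}t^{c(\square_1)} - q^{T(\square_2)}t^{c(\square_2)}} \bigr)$, using that on $\PSYT(\lambda;T)$ the weight $w_\tau$ is constant (equal to $T$ composed with the box-labelling) so $q^{w_\tau(i)} = q^{T(\square_1)}$.

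There are two things to verify carefully. First, one must confirm that the cover relations used to descend from $\Top(T)$ within $\PSYT(\lambda;T)$ are \emph{only} of the $s_i$-type with $w_\tau(i) = w_\tau(i+1)$ — the $\Psi$-move and the $w_\tau(i) < w_\tau(i+1)$ moves change $\mathfrak{p}_\lambda(\tau)$ or are incompatible with a fixed $T$, so they never occur inside a single fiber $\PSYT(\lambda;T)$; this needs a short combinatorial argument. Second, and this is the main obstacle, one must show the recursion is \emph{consistent}, i.e. well-defined independent of the chain of moves chosen to reach $\tau$ from $\Top(T)$ — equivalently, that the product formula is compatible with the lattice structure (diamond/hexagon relations among the $s_i$). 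This reduces to checking that whenever two moves commute (or satisfy a braid-like relation) the corresponding products of ratios agree, which should follow from the fact that the inversion set $\Inv(\tau)$ is intrinsically defined (not path-dependent) and that each elementary move adds exactly one specific pair; combined with the known irreducibility of $U_T$ (Proposition \ref{Mackey decomposition}) and uniqueness of $P_T$ in Definition \ref{Macdonald polynomial def}, consistency is forced, but making this rigorous is where the real work lies. Alternatively one can sidestep path-independence by noting that $P_T$ exists and is unique, so it suffices to verify that the right-hand side of the claimed formula is $T_i$-invariant for each $i$ and has the correct leading term — reducing everything to the single-step coefficient computation above plus a check that the product formula transforms correctly under each $tT_i^{-1}$, which is the cleanest route.
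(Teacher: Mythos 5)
Your overall strategy --- extract a single-step coefficient recursion from the intertwiner relation in Proposition \ref{weight basis prop} together with $T_i$-invariance of $P_T$, then either establish path-independence or, more cleanly, verify that the proposed right-hand side is $T_i$-invariant with leading term $F_{\Top(T)}$ and invoke the uniqueness in Definition \ref{Macdonald polynomial def} via Proposition \ref{sym AHA submodules} --- is the right one, but the one computation the whole proposition rests on is wrong as you state it. First, $tT_i^{-1}$ does not fix $T_i$-invariant vectors: the quadratic relation gives $tT_i^{-1} = T_i + (t-1)$, so it scales them by $t$. Carrying out the two-dimensional block computation correctly, with $A = tT_i^{-1}$, $\alpha = q^{w_\tau(i)}t^{c_\tau(i)}$, $\beta = q^{w_\tau(i+1)}t^{c_\tau(i+1)}$, one has $AF_\tau = F_{s_i(\tau)} - \frac{(t-1)\beta}{\alpha-\beta}F_\tau$ and $A^2 = (t-1)A + t$, and comparing coefficients of $F_{s_i(\tau)}$ in $AP_T = tP_T$ forces $\kappa_\tau = \frac{\alpha - t\beta}{\alpha - \beta}\,\kappa_{s_i(\tau)}$. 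Thus the extra factor attaches to the \emph{lower} tableau $\tau$ (descending adds the inversion; the upward move $\tau \mapsto s_i(\tau)$ removes one), and the shift $t^{+1}$ sits on the box carrying the label $i+1$ in $\tau$, i.e.\ on the first entry of the ordered inversion pair --- which is exactly the factor in the proposition. Your asserted ratio $\kappa_{s_i(\tau)}/\kappa_\tau = \frac{q^{w_\tau(i)}t^{c_\tau(i)+1}-q^{w_\tau(i+1)}t^{c_\tau(i+1)}}{q^{w_\tau(i)}t^{c_\tau(i)}-q^{w_\tau(i+1)}t^{c_\tau(i+1)}}$ is wrong both in direction and in which box receives the power of $t$; propagated through the induction it would produce a different formula, not the stated one.

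Second, your ``first thing to verify'' is false: $s_i$ exchanges only the labels $i$ and $i+1$ between two boxes while the $q$-exponents stay attached to the boxes, so \emph{every} $s_i$-cover, including those with $w_\tau(i) < w_\tau(i+1)$, stays inside the fiber $\PSYT(\lambda;T)$; such moves are indispensable for connecting $\Min(T)$ to $\Top(T)$ (only $\Psi$-covers leave the fiber). What you actually need is that every $\tau \in \PSYT(\lambda;T)$ is reachable from $\Top(T)$ by a chain of $s_i$-covers within the fiber (cf.\ Lemma \ref{locally maximal periodic tableau}); since the intertwiner relation treats both kinds of $s_i$-covers uniformly, this correction simplifies rather than obstructs your plan, but the connectivity statement still has to be argued. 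Finally, in your preferred ``cleanest route'' (check $T_i$-invariance of the explicit sum and cite uniqueness), note that Proposition \ref{weight basis prop} only gives the action when $s_i(\tau) > \tau$; you must also supply the action of $T_i$ on $F_\tau$ in the degenerate cases where $s_i(\tau)$ is not a periodic standard tableau (labels $i,i+1$ in adjacent boxes of a row or column with equal exponent, where $T_iF_\tau = F_\tau$ resp.\ $-tF_\tau$ as in Definition \ref{irreps for finite hecke defn}); without these the invariance check is incomplete.
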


\begin{prop}\label{simple spectrum}
    The set $\{P_T: T \in \RSSYT(\lambda)\}$ is a $\mathbb{Q}(q,t)[\theta_1,\ldots, \theta_n]^{\mathfrak{S}_n}$-weight basis for $W_{\lambda}$. Further, 
    $$P_{0,\ell}^{(n)}(P_T) = \left( \sum_{\square \in \lambda} q^{\ell T(\square)}t^{\ell c(\square)}  \right) P_T.$$
\end{prop}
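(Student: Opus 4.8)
The plan is to prove both claims simultaneously, using the expansion of $P_T$ into the $F_\tau$ basis from Prop. \ref{expansion of sym into nonsym} together with the $\theta^{(n)}$-weight formula from Prop. \ref{weight basis prop}. First I would establish the eigenvalue claim. Since $P_T = \sum_{\tau \in \PSYT(\lambda;T)} c_\tau F_\tau$ for certain nonzero scalars $c_\tau$, and since each $F_\tau$ is a simultaneous $\theta_i^{(n)}$-eigenvector with $\theta_i^{(n)}(F_\tau) = q^{w_\tau(i)}t^{c_\tau(i)}F_\tau$, the element $P_{0,\ell}^{(n)} = \epsilon^{(n)}(\sum_i \theta_i^\ell)\epsilon^{(n)}$ acts on $P_T$ (which lies in $\epsilon^{(n)}(U_T) \subseteq \epsilon^{(n)}(V_\lambda) = W_\lambda$, so $\epsilon^{(n)}$ fixes it) by $\sum_{i=1}^n q^{\ell w_\tau(i)}t^{\ell c_\tau(i)}$ on each summand $F_\tau$. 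The key combinatorial observation is that this sum is \emph{independent of $\tau \in \PSYT(\lambda;T)$}: as $i$ ranges over $1,\ldots,n$, the pair $(w_\tau(i), c_\tau(i))$ ranges exactly over the pairs $(T(\square), c(\square))$ for $\square \in \lambda$, because $\tau$ is a bijective labelling with $\mathfrak{p}_\lambda(\tau) = T$ (so the $q$-exponent at box $\square$ is $T(\square)$) and the content $c(\square)$ depends only on the box. Hence $\sum_{i=1}^n q^{\ell w_\tau(i)}t^{\ell c_\tau(i)} = \sum_{\square \in \lambda} q^{\ell T(\square)}t^{\ell c(\square)}$ for every $\tau$, and therefore $P_{0,\ell}^{(n)}(P_T) = \left(\sum_{\square \in \lambda} q^{\ell T(\square)}t^{\ell c(\square)}\right)P_T$.

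Next I would address the basis claim. By Prop. \ref{sym AHA submodules}, $\dim \epsilon^{(n)}(U_T) = 1$ exactly when $T \in \RSSYT(\lambda)$ and $0$ otherwise, and by Prop. \ref{Mackey decomposition} we have $\Res V_\lambda = \bigoplus_{T \in \RYT(\lambda)} U_T$. Applying the idempotent $\epsilon^{(n)}$ gives $W_\lambda = \epsilon^{(n)}(V_\lambda) = \bigoplus_{T \in \RYT(\lambda)} \epsilon^{(n)}(U_T) = \bigoplus_{T \in \RSSYT(\lambda)} \epsilon^{(n)}(U_T)$, and since $P_T$ is by Def. \ref{Macdonald polynomial def} a nonzero element of the one-dimensional space $\epsilon^{(n)}(U_T)$, the set $\{P_T : T \in \RSSYT(\lambda)\}$ is a basis of $W_\lambda$. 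It remains to see it is a \emph{weight basis} for the action of $\mathbb{Q}(q,t)[\theta_1,\ldots,\theta_n]^{\mathfrak{S}_n}$: this algebra is generated by the power sums $\sum_i \theta_i^\ell$, whose action on $W_\lambda$ (after symmetrizing, i.e. as operators $\epsilon^{(n)}(\sum_i \theta_i^\ell)\epsilon^{(n)} = P_{0,\ell}^{(n)}$ restricted to the $\epsilon^{(n)}$-fixed space) is diagonalized by the $P_T$ by the eigenvalue computation above. So each $P_T$ is a simultaneous eigenvector, i.e. a weight vector, for the whole symmetric subalgebra.

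I should also record why the weights are genuinely attached to distinct $T$ — or at least that the statement only asserts a weight basis, not simple spectrum at finite level; in fact the $\theta$-weight of $P_T$ under $P_{0,\ell}^{(n)}$ is $\sum_{\square} q^{\ell T(\square)}t^{\ell c(\square)}$, and distinct $T \in \RSSYT(\lambda)$ can in principle collide for special $q,t$ but are distinct as elements of $\mathbb{Q}(q,t)$ by a standard argument comparing the multiset $\{(T(\square), c(\square))\}$, though this is not strictly needed for the claim as stated. The only real subtlety — and the step I expect to need the most care — is the bookkeeping in the first paragraph: making sure that "$\tau$ bijective with $\mathfrak{p}_\lambda(\tau) = T$" really does force the multiset $\{(w_\tau(i), c_\tau(i)) : 1 \le i \le n\}$ to equal $\{(T(\square), c(\square)) : \square \in \lambda\}$ regardless of which $\tau \in \PSYT(\lambda;T)$ we pick. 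This is immediate from the definitions of $w_\tau, c_\tau$, and $\mathfrak{p}_\lambda$ in Def. \ref{ordering and operations on tableaux defs} (if $\tau(\square) = iq^b$ then $w_\tau(i) = b = T(\square)$ and $c_\tau(i) = c(\square)$), so the argument is short; everything else is assembling Props. \ref{weight basis prop}, \ref{Mackey decomposition}, \ref{sym AHA submodules}, \ref{expansion of sym into nonsym}, and Def. \ref{Macdonald polynomial def}.
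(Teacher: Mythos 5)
Your proposal is correct and follows the route the paper indicates (it omits the proof in this version but points to Prop.~\ref{weight basis prop}, Prop.~\ref{Mackey decomposition}, Prop.~\ref{sym AHA submodules} and Def.~\ref{Macdonald polynomial def}): the key point, that the multiset $\{(w_\tau(i),c_\tau(i))\}_{i=1}^n$ equals $\{(T(\square),c(\square))\}_{\square\in\lambda}$ for every $\tau\in\PSYT(\lambda;T)$ so that symmetric polynomials in the $\theta_i$ act on all of $U_T$ (hence on $P_T\in\epsilon^{(n)}(U_T)\subseteq U_T$) by the stated scalar, is exactly the intended argument, and the basis claim is the correct assembly of the decomposition $W_\lambda=\bigoplus_{T\in\RSSYT(\lambda)}\epsilon^{(n)}(U_T)$ with the one-dimensionality statement. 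Note only that you do not even need the explicit expansion of Prop.~\ref{expansion of sym into nonsym} (Def.~\ref{Macdonald polynomial def} already places $P_T$ in $U_T$), and that your argument in fact shows the symmetric algebra in the $\theta_i$ acts directly on $W_\lambda$ with the $P_T$ as simultaneous eigenvectors, so the sandwiching by $\epsilon^{(n)}$ is harmless.
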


\begin{cor}\label{stability for MacD poly}
Let $T \in \RSSYT(\lambda^{(n)})$ and $T' \in \RSSYT(\lambda^{(n+1)})$ such that $T(\square) = T'(\square)$ for $ \square \in \lambda^{(n)}$ and $T'(\square_0) = 0$ for $\square_0 \in \lambda^{(n+1)}/\lambda^{(n)}.$
Then
$\Phi^{(n)}_{\lambda}(P_{T'})= P_{T}.$
\end{cor}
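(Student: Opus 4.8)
The plan is to combine the expansion of Proposition~\ref{expansion of sym into nonsym} with the stability of the locally maximal elements from Proposition~\ref{stability for locally maximal psyt}, reducing the corollary to a purely combinatorial identification of the coefficients appearing in the two $F$-expansions of $P_{T'}$ and $P_T$. First I would apply $\Phi^{(n)}_{\lambda}$ term-by-term to the expansion
$$P_{T'} = \sum_{\tau' \in \PSYT(\lambda^{(n+1)};T')} \left( \prod_{(\square_1,\square_2) \in \Inv(\tau')} \frac{q^{T'(\square_1)}t^{c(\square_1)+1} - q^{T'(\square_2)}t^{c(\square_2)}}{q^{T'(\square_1)}t^{c(\square_1)} - q^{T'(\square_2)}t^{c(\square_2)}} \right) F_{\tau'},$$
and the key point is to understand which $F_{\tau'}$ survive under $\Phi^{(n)}_{\lambda}$ and what they map to. By Definition~\ref{connecting maps def} and the explicit triangular form of the $F_{\tau}$ (together with Proposition~\ref{weight basis prop}), $\Phi^{(n)}_{\lambda}$ should kill $F_{\tau'}$ unless $\tau'$ restricts appropriately to $\lambda^{(n)}$, i.e. unless the box $\square_0 \in \lambda^{(n+1)}/\lambda^{(n)}$ carries the label $(n+1)q^0$; in that surviving case $\Phi^{(n)}_{\lambda}(F_{\tau'}) = F_{\tau}$ where $\tau = \mathfrak q^{(n)}_{\lambda}(\tau')$ is the restriction. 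This is exactly the content one expects to extract from the proof of Proposition~\ref{stability for locally maximal psyt}, applied not just to $\Top(T')$ but to all of $\PSYT(\lambda^{(n+1)};T')$; I would either cite that argument verbatim or re-run it using the $\theta$-weight characterization, since the $\theta^{(n)}$-weights are distinct and $\Phi^{(n)}_{\lambda}$ is easily checked to be compatible with the relevant weight spaces via Corollary~\ref{connecting maps and symmetrization of thetas}-type bookkeeping.

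Next I would set up the bijection. Restriction $\tau' \mapsto \tau'|_{\lambda^{(n)}}$ gives a bijection between $\{\tau' \in \PSYT(\lambda^{(n+1)};T') : \tau'(\square_0) = (n+1)q^0\}$ and $\PSYT(\lambda^{(n)};T)$: indeed since $T'(\square_0)=0$ is the minimal entry and $\square_0$ is an addable corner of $\lambda^{(n)}$ placed in the first row, forcing its label to be the globally largest symbol $(n+1)q^0$ is consistent with the strict-increase condition precisely when the remaining labels $1,\dots,n$ (with their $q$-powers) form a valid element of $\PSYT(\lambda^{(n)};T)$, and conversely any such element extends uniquely. Under this bijection I must show the product of inversion coefficients is unchanged, which amounts to checking $\Inv(\tau')$ and $\Inv(\tau)$ contribute the same factors: every inversion pair of $\tau$ is an inversion pair of $\tau'$ with the identical local data $(q^{T(\square)}t^{c(\square)})$, and any inversion pair of $\tau'$ involving $\square_0$ would need $\square_0$ to hold a label larger than its partner under the tableau-order yet smaller under the integer order — but $\square_0$ holds $(n+1)q^0$, the integer-largest label, so no new inversions involving $\square_0$ appear (one must check the definition of $\Inv$ carefully here, comparing $S(T')(\square_0)$ against the other $S$-values). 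Hence the coefficient attached to $\tau'$ equals the coefficient attached to $\tau$, and summing over the surviving terms reproduces the expansion of $P_T$ from Proposition~\ref{expansion of sym into nonsym}.

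The main obstacle I anticipate is the careful verification that $\Phi^{(n)}_{\lambda}$ annihilates $F_{\tau'}$ for every $\tau'$ with $\tau'(\square_0) \neq (n+1)q^0$, rather than just for the specific tableaux handled in Proposition~\ref{stability for locally maximal psyt}. The subtlety is that $\Phi^{(n)}_{\lambda}$ is defined on the monomial basis $X^{\alpha}\otimes v$, not directly on the $F$-basis, so one needs either the triangularity estimates for general $F_{\tau'}$ (which are only stated in the excerpt for $F_{\Top(T')}$) or an indirect argument: e.g., showing $\Phi^{(n)}_{\lambda}$ intertwines the $\theta^{(n+1)}_i$ and $\theta^{(n)}_i$ actions up to the shift in Corollary~\ref{connecting maps and symmetrization of thetas}, so that $\Phi^{(n)}_{\lambda}$ must send a $\theta^{(n+1)}$-weight vector to a $\theta^{(n)}$-weight vector or to zero, and then matching the weight $q^{w_{\tau'}(i)}t^{c_{\tau'}(i)}$ against the allowable weights on $V_{\lambda^{(n)}}$ — which, because the $q$-power on $\square_0$ is $w_{\tau'}(n+1\text{'s position})$ and this must vanish for the image to be nonzero, forces $\tau'(\square_0)=(n+1)q^0$. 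Modulo importing that one lemma from the full-length version, the rest is bookkeeping with the inversion statistic, and I would organize the write-up as: (1) weight-compatibility of $\Phi^{(n)}_{\lambda}$; (2) the surviving-terms lemma; (3) the inversion-preserving bijection; (4) assembling the sum.
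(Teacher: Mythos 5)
Your reduction of the corollary to a term-by-term statement about $\Phi^{(n)}_{\lambda}$ on the $F$-basis of $U_{T'}$ is the right shape, and the purely combinatorial parts are correct: since $\square_0$ sits alone in the last column, $S(T')(\square_0)=n+1$ and $S(T')|_{\lambda^{(n)}}=S(T)$, so no inversion pair of $\tau'$ involves $\square_0$, the restriction map is a bijection from $\{\tau'\in\PSYT(\lambda^{(n+1)};T'):\tau'(\square_0)=(n+1)q^0\}$ onto $\PSYT(\lambda^{(n)};T)$, and the inversion products in Prop.~\ref{expansion of sym into nonsym} match factor for factor. The genuine gap is exactly the step you flag: the claim that $\Phi^{(n)}_{\lambda}(F_{\tau'})=0$ whenever $\tau'(\square_0)\neq(n+1)q^0$ and $\Phi^{(n)}_{\lambda}(F_{\tau'})=F_{\tau'|_{\lambda^{(n)}}}$ (with coefficient exactly $1$) otherwise. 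This is strictly stronger than Prop.~\ref{stability for locally maximal psyt}, which the paper proves only for $\Top(T')$ and singles out as the hardest step, precisely because $\Phi^{(n)}_{\lambda}$ is defined on monomials and does not interact transparently with the $F$-basis. Your proposed fallback does not close this gap: Cor.~\ref{connecting maps and symmetrization of thetas} only intertwines the \emph{symmetrized} power sums of the $\theta_i$ (up to an additive shift), and every $F_{\tau'}$ with $\mathfrak{p}_{\lambda^{(n+1)}}(\tau')=T'$ has the same symmetric $\theta$-weight, so no bookkeeping at that level can tell which terms are killed, let alone force the surviving images to be weight vectors with coefficient $1$; moreover $\Phi^{(n)}_{\lambda}$ does not intertwine the individual $\theta_i$, so the weight-matching argument you sketch (``the $q$-power on $\square_0$ must vanish, forcing $\tau'(\square_0)=(n+1)q^0$'') is not available --- indeed the $q$-power at $\square_0$ is automatically $0$ for every $\tau'$ in the class, and the issue is the integer label, which symmetric weights cannot see.

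A route that needs much less than your full term-by-term claim, and stays closer to what is stated in the paper, is: (i) check that $\Phi^{(n)}_{\lambda}$ commutes with $T_1,\ldots,T_{n-1}$, so it maps $W_{\lambda^{(n+1)}}$ into $W_{\lambda^{(n)}}$; (ii) apply Cor.~\ref{connecting maps and symmetrization of thetas} to $P_{T'}$, noting that the new box contributes $(q^{\ell\cdot 0}-1)t^{\ell c(\square_0)}=0$ to the shifted eigenvalue, so $\Phi^{(n)}_{\lambda}(P_{T'})$ is (zero or) a simultaneous eigenvector with exactly the eigenvalues of $P_T$; by Prop.~\ref{simple spectrum} it is therefore a scalar multiple $cP_T$; (iii) pin down $c=1$, e.g.\ via Prop.~\ref{stability for locally maximal psyt} together with an argument that the remaining terms of $P_{T'}$ cannot contribute $F_{\Top(T)}$, or via the triangular leading-monomial $t^{-b_{T'}}X^{\nu(T')}\otimes S(T')$. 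Step (iii) still requires an input beyond the results quoted in this extended abstract, but it isolates the missing ingredient to a normalization statement rather than the full strength of your surviving-terms lemma; as written, your proof stands or falls with that imported lemma from the complete version.
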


\subsection{Stable Limit of the $W_{\lambda^{(n)}}$} 

We now can define the stable-limit spaces $\widetilde{W}_{\lambda}$ and the generalized symmetric Macdonald functions.

\begin{defn}\label{stable limit defn}
    Let $\lambda \in \Par$.  Define the infinite diagram $\lambda^{(\infty)}:= \bigcup_{n \geq n_{\lambda}} \lambda^{(n)}.$ Define $\Omega(\lambda)$ to be the set of all labellings $T:\lambda^{(\infty)} \rightarrow \mathbb{Z}_{\geq 0}$ such that 
    \begin{itemize}
        \item $|\{ \square \in \lambda^{(\infty)}: T(\square) \neq 0\}| < \infty$
        \item $T$ increases weakly along rows
        \item $T$ increases strictly along columns.
    \end{itemize}

    Define the space $W_{\lambda}^{(\infty)}$ to be the inverse limit $\varprojlim W_{\lambda^{(n)}}$ with respect to the maps $\Phi_{\lambda}^{(n)}.$ Let $\widetilde{W}_{\lambda}$ be the subspace of all bounded $X$-degree elements of $W_{\lambda}^{(\infty)}$. For any symmetric function $F \in \Lambda$ define $F[X]^{\bullet}$ to be the corresponding multiplication operator on $\widetilde{W}_{\lambda}$. Lastly, for $T \in \Omega(\lambda)$ define the generalized symmetric Macdonald function $\MacD_T:= \lim_{n} P_{T|_{\lambda^{(n)}}} \in \widetilde{W}_{\lambda}.$
\end{defn}

\begin{remark}
    Each $\MacD_T$ is homogeneous of degree 
$ \mathrm{deg}(\MacD_T) = \sum_{\square \in \lambda^{(\infty)}} T(\square) < \infty.$ It is clear from definition that the set of all $\MacD_T$ for $T \in \Omega(\lambda)$ gives a $\mathbb{Q}(q,t)$-basis of $\widetilde{W}_{\lambda}.$ Lastly, the multiplication operators $F[X]^{\bullet}$ are well-defined since $\Phi_{\lambda}^{(n)}X_{n+1} = 0.$
\end{remark}

\begin{defn}\label{MacD operator defn}
    For $\ell > 0 $ define the operator $\Delta_{\ell}:\widetilde{W}_{\lambda} \rightarrow \widetilde{W}_{\lambda}$ to be the stable-limit 
    
    $\Delta_{\ell}:= \lim_n \left(P_{0,\ell}^{(n)} - \sum_{\square \in \lambda^{(n)}} t^{\ell c(\square)} \right).$
\end{defn}
    
\subsection{$\sE^{+}$ Action on $\widetilde{W}_{\lambda}$}

Finally, we are ready to state and prove the main result of this paper.

\begin{thm}[Main Theorem] \label{main theorem}
    For $\lambda \in \Par$, $\widetilde{W}_{\lambda}$ is a graded $\sE^{+}$-module with action determined for $\ell>0$ by
    \begin{itemize}
        \item $P_{\ell,0} \rightarrow q^{\ell} p_{\ell}[X]^{\bullet}$
        \item $P_{0,\ell} \rightarrow \Delta_{\ell}$.
    \end{itemize}
     Further, $\widetilde{W}_{\lambda}$ is spanned by a basis of eigenvectors $\{ \MacD_{T}\}_{T \in \Omega(\lambda)}$ with distinct eigenvalues for the Macdonald operator $\Delta = \Delta_1$.
\end{thm}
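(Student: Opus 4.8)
The plan is to transport the $\sD_n^{\text{sph}}$-module structures on the $W_{\lambda^{(n)}}$ through the inverse limit and check that they survive the passage to the bounded-degree subspace $\widetilde{W}_\lambda$. First I would assemble the generators: by the Schiffmann--Vasserot theorem the maps $\sD_{n+1}^{\text{sph}} \to \sD_n^{\text{sph}}$ send $P_{0,\ell}^{(n+1)} \mapsto P_{0,\ell}^{(n)}$ and $P_{\ell,0}^{(n+1)} \mapsto P_{\ell,0}^{(n)}$, so an element of $\sE^{+}$ is a compatible sequence of operators. The obstruction to simply pulling back the $\sD_n^{\text{sph}}$-action is that the $P_{0,\ell}^{(n)}$ do \emph{not} commute with the connecting maps $\Phi_\lambda^{(n)}$ on the nose --- Corollary \ref{connecting maps and symmetrization of thetas} tells us they only commute after the scalar shift by $\sum_{\square \in \lambda^{(n)}} t^{\ell c(\square)}$. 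So the correct operators to push through the limit are the shifted ones $P_{0,\ell}^{(n)} - \sum_{\square \in \lambda^{(n)}} t^{\ell c(\square)}$, and their limit is exactly $\Delta_\ell$ of Definition \ref{MacD operator defn}; this is why $P_{0,\ell}$ must act by $\Delta_\ell$ rather than by a naive symmetrization of the $\theta_i$. For the other generator, $P_{\ell,0}^{(n)} = q^\ell \epsilon^{(n)}(\sum_i X_i^\ell)\epsilon^{(n)}$ is, on the spherical subspace $W_{\lambda^{(n)}}$, simply multiplication by the power sum $q^\ell p_\ell[X]$ (in the sense of the symmetric-function action on these modules), and $\Phi_\lambda^{(n)}$ kills $X_{n+1}$, so these multiplication operators are automatically compatible and descend to $q^\ell p_\ell[X]^\bullet$ on $\widetilde W_\lambda$.

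Next I would verify the relations of $\sE^{+}$. Since $\sE^{+}$ is the inverse limit of the $\sD_n^{\text{sph}}$, any relation among $P_{0,\ell}$, $P_{\ell,0}$ in $\sE^{+}$ is the limit of the corresponding relations in $\sD_n^{\text{sph}}$, which hold on each $W_{\lambda^{(n)}}$. The one subtlety is the scalar shift: passing from $P_{0,\ell}^{(n)}$ to $P_{0,\ell}^{(n)} - \sum_\square t^{\ell c(\square)}$ changes the operators by central (scalar) elements, and one must check that the $\sE^{+}$-relations are stable under such shifts --- but the relations of $\sE^{+}$ among the $P_{0,k}$'s and between the $P_{0,k}$'s and $P_{\ell,0}$'s are either commutation relations (unaffected by adding scalars to the $P_{0,k}$) or can be checked to be compatible with the shift because the shift is itself the limit of an inner/scalar adjustment coming from the compatible system of constants $\sum_{\square \in \lambda^{(n)}} t^{\ell c(\square)}$. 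Concretely, I would note that the assignment $P_{0,\ell} \mapsto \Delta_\ell$, $P_{\ell,0}\mapsto q^\ell p_\ell[X]^\bullet$ is well-defined on each finite stage up to the scalar correction and that these corrections are consistent across the tower (Corollary \ref{connecting maps and symmetrization of thetas} is precisely the consistency statement), hence the limit operators satisfy all defining relations of $\sE^{+}$. I would also record that everything is graded: $\deg(X_i) = 1$ forces $q^\ell p_\ell[X]^\bullet$ to have degree $\ell$ and $\Delta_\ell$ to have degree $0$ (the shift is degree $0$ and $P_{0,\ell}^{(n)}$ is degree $0$), matching the grading on $\sE^{+}$, and $\widetilde W_\lambda$ is graded by $X$-degree with finite-dimensional graded pieces since $\Omega(\lambda)$ has finitely many elements of each degree.

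For the eigenbasis statement I would argue as follows. By Proposition \ref{simple spectrum}, $P_{0,\ell}^{(n)}(P_T) = \big(\sum_{\square \in \lambda^{(n)}} q^{\ell T(\square)} t^{\ell c(\square)}\big) P_T$ for $T \in \RSSYT(\lambda^{(n)})$, so $\big(P_{0,\ell}^{(n)} - \sum_{\square} t^{\ell c(\square)}\big)(P_T) = \big(\sum_{\square \in \lambda^{(n)}} (q^{\ell T(\square)} - 1) t^{\ell c(\square)}\big) P_T$, and the eigenvalue only receives contributions from boxes where $T(\square) \neq 0$. By Corollary \ref{stability for MacD poly} the $P_T$ are compatible under $\Phi_\lambda^{(n)}$ when one extends $T$ by zeros, so for $T \in \Omega(\lambda)$ the limit $\MacD_T = \lim_n P_{T|_{\lambda^{(n)}}}$ is well-defined and, taking $n$ large enough that all nonzero entries of $T$ lie in $\lambda^{(n)}$, we get $\Delta_\ell(\MacD_T) = \big(\sum_{\square \in \lambda^{(\infty)}} (q^{\ell T(\square)} - 1) t^{\ell c(\square)}\big)\MacD_T$, a \emph{finite} sum, hence a well-defined scalar. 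In particular $\Delta = \Delta_1$ acts diagonally on the basis $\{\MacD_T\}_{T \in \Omega(\lambda)}$ with eigenvalue $\sum_{\square}(q^{T(\square)}-1)t^{c(\square)}$. The remaining point --- and the one I expect to require the most care --- is \emph{distinctness} of these eigenvalues: I must show that if $T \neq T'$ in $\Omega(\lambda)$ then $\sum_\square (q^{T(\square)} - 1) t^{c(\square)} \neq \sum_\square (q^{T'(\square)} - 1) t^{c(\square)}$ as elements of $\mathbb{Q}(q,t)$. This is a statement about linear independence of the functions $q^{k} t^{c}$ over the boxes of the infinite diagram: grouping by content $c$, the coefficient of a fixed power of $t$ is $\sum_{c(\square) = c}(q^{T(\square)} - 1)$, a Laurent polynomial in $q$ that is nonzero iff the multiset of values $\{T(\square) : c(\square) = c\}$ differs from $\{T'(\square): c(\square) = c\}$; since the entries in a single diagonal are distinct (strict increase down columns, and a diagonal meets each column at most once) one can read off $T$ from the diagonal-indexed multisets, so $T \neq T'$ forces a discrepancy on some diagonal, giving distinct eigenvalues. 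I would write this out carefully, as it is the crux of the final assertion; the module axioms themselves are then essentially bookkeeping on top of Corollary \ref{connecting maps and symmetrization of thetas} and the Schiffmann--Vasserot tower.
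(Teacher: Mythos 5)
Your overall route matches the paper's: push the relations through the tower $\{W_{\lambda^{(n)}}, \Phi^{(n)}_{\lambda}\}$, use Corollary \ref{connecting maps and symmetrization of thetas} to see that the correct limit operators are the shifted ones $\Delta_{\ell}$ together with $q^{\ell}p_{\ell}[X]^{\bullet}$, and deduce the eigenbasis statement from Proposition \ref{simple spectrum} and Corollary \ref{stability for MacD poly}. Your treatment of the eigenvalues (finiteness of the sum $\sum_{\square}(q^{\ell T(\square)}-1)t^{\ell c(\square)}$ and the diagonal-by-diagonal injectivity argument for distinctness) is fine and in fact more explicit than what the paper records.

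The genuine gap is at the step you yourself flag as "the one subtlety": why do the \emph{shifted} operators $P^{(n)}_{0,\ell} - \sum_{\square \in \lambda^{(n)}} t^{\ell c(\square)}$, together with the unshifted $P^{(n)}_{\ell,0}$, still satisfy the defining relations of $\sD_n^{\text{sph}}$ (and hence, in the limit, of $\sE^{+}$)? Your claim that the relations "are either commutation relations (unaffected by adding scalars to the $P_{0,k}$) or can be checked to be compatible with the shift" is not a proof and is not obviously true: in the Burban--Schiffmann presentation the generators $P_{0,\ell}$ and $P_{k,0}$ do not commute, the commutators are expressed back in terms of the algebra (via the elements $P_{a,b}$ for general $(a,b)$), and there is no a priori reason that translating the commutative family $P_{0,\ell}$ by scalars yields another representation. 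This is exactly the nontrivial input the paper supplies: by Lemma 1.3 of Schiffmann--Vasserot there exist algebra \emph{automorphisms} $\Gamma^{(n)}$ of $\sD_n^{\text{sph}}$ with $\Gamma^{(n)}(P^{(n)}_{0,\ell}) = P^{(n)}_{0,\ell} - \sum_{\square \in \lambda^{(n)}} t^{\ell c(\square)}$ and $\Gamma^{(n)}(P^{(n)}_{\ell,0}) = P^{(n)}_{\ell,0}$. With $\Gamma^{(n)}$ in hand, any relation $F(P_{0,1},\ldots,P_{0,r},P_{1,0},\ldots,P_{s,0})=0$ of $\sE^{+}$ gives, after composing with the projections $\Pi_n: \widetilde{W}_{\lambda} \to W_{\lambda^{(n)}}$ and using Corollary \ref{connecting maps and symmetrization of thetas}, the identity $\Pi_n F(\ldots) = \Gamma^{(n)}\bigl(F(P^{(n)}_{0,1},\ldots,P^{(n)}_{s,0})\bigr)\Pi_n = 0$ for all $n$, which is what kills $F$ on $\widetilde{W}_{\lambda}$. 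Your phrase "the shift is itself the limit of an inner/scalar adjustment coming from the compatible system of constants" gestures at this but does not establish the existence of any such automorphism or an equivalent verification of the relations; without it the core of the theorem (that $\widetilde{W}_{\lambda}$ is actually an $\sE^{+}$-module) is unproved. To repair the argument, either invoke the Schiffmann--Vasserot automorphisms as above or give an independent verification that the shift preserves the full relation set.
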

\begin{proof}
It suffices to establish that the map $\sE^{+} \rightarrow \End_{\mathbb{Q}(q,t)}(\widetilde{W}_{\lambda})$ satisfies the generating relations of $\sE^{+}.$ Any such relation is a non-commutative polynomial expression in $\sE^{+}$ of the form $$F(P_{0,1},\ldots,P_{0,r},P_{1,0},\ldots,P_{s,0}) = 0$$ for some $r,s >0.$ By an argument of Schiffmann-Vasserot (Lemma 1.3 in \cite{SV}), there are automorphisms $\Gamma^{(n)}$ of $\sD_n^{\text{sph}}$ such that $\Gamma^{(n)}(P^{(n)}_{0,\ell})= P^{(n)}_{0,\ell} - \sum_{\square \in \lambda^{(n)}} t^{\ell c(\square)}$ and $\Gamma^{(n)}(P^{(n)}_{\ell,0}) = P^{(n)}_{\ell,0}.$ By applying the canonical quotient maps $\Pi_n: \widetilde{W}_{\lambda} \rightarrow W_{\lambda^{(n)}}$ we see using Cor. \ref{connecting maps and symmetrization of thetas} that as maps 

\begin{align*}
\Pi_nF(P_{0,1},\ldots,P_{0,r},P_{1,0},\ldots,P_{s,0})& =F(\Gamma^{(n)}(P^{(n)}_{0,1}),\ldots,\Gamma^{(n)}(P^{(n)}_{0,r}),\Gamma^{(n)}(P^{(n)}_{1,0}),\ldots,\Gamma^{(n)}(P^{(n)}_{s,0}))\Pi_n \\
&= \Gamma^{(n)}(F(P^{(n)}_{0,1},\ldots,P^{(n)}_{0,r},P^{(n)}_{1,0},\ldots,P^{(n)}_{s,0})) \Pi_n = 0.\\
\end{align*}

As this holds for all $n \geq n_{\lambda}$, it follows that $F(P_{0,1},\ldots,P_{0,r},P_{1,0},\ldots,P_{s,0}) = 0$ in $\End_{\mathbb{Q}(q,t)}(\widetilde{W}_{\lambda})$ as desired. The last statement regarding the spectrum of $\Delta$ follows directly from Prop. \ref{simple spectrum} and Cor. \ref{stability for MacD poly}.
\end{proof}

\begin{remark}
    For $\lambda = \emptyset$, $\widetilde{W}_{\emptyset} = \Lambda$ recovers the standard representation of $\sE^{+}.$ In this case, $\Omega(\emptyset) = \Par$ and $\MacD_{\mu} = P_{\mu}[X;q^{-1},t]$ (up to nonzero scalar).
\end{remark}

By considering the grading of each module $\widetilde{W}_{\lambda}$ and the spectral theory of the Macdonald operator $\Delta$ we can prove the following. 

\begin{prop}
    For $\lambda, \mu \in \Par$ distinct, $\widetilde{W}_{\lambda} \ncong \widetilde{W}_{\mu}$ as graded $\mathcal{E}^{+}$-modules. 
\end{prop}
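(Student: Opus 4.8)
The plan is to distinguish the modules $\widetilde{W}_{\lambda}$ by combining their graded dimensions with the spectrum of the Macdonald operator $\Delta$. First I would compute the Hilbert series of $\widetilde{W}_{\lambda}$. By the remark following Definition \ref{stable limit defn}, the set $\{\MacD_T : T \in \Omega(\lambda)\}$ is a homogeneous basis with $\deg(\MacD_T) = \sum_{\square \in \lambda^{(\infty)}} T(\square)$, so
\begin{equation*}
\Hilb(\widetilde{W}_{\lambda}; z) = \sum_{T \in \Omega(\lambda)} z^{\sum_{\square} T(\square)}.
\end{equation*}
The key point is that this series detects $\lambda$: looking at the lowest-degree pieces, the degree-$0$ part is always one-dimensional (the all-zero labelling), but the degree-$1$ part has dimension equal to the number of boxes of $\lambda^{(\infty)}$ that can be independently raised to $1$ while keeping columns strictly increasing and rows weakly increasing — and a short combinatorial check shows this count, together with how the dimensions grow in low degrees, recovers the shape $\lambda$ uniquely (in fact $\Hilb(\widetilde{W}_{\lambda};z)$ should be an explicit product/series over the arm-leg data of $\lambda$, analogous to the $\lambda = \emptyset$ case where one gets $\prod_{i\geq 1}(1-z^i)^{-1}$). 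So if $\widetilde{W}_{\lambda} \cong \widetilde{W}_{\mu}$ as graded modules, then already as graded vector spaces their Hilbert series agree, which forces $\lambda = \mu$.

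Alternatively — and this is the cleaner route that uses the $\sE^{+}$-structure rather than just the grading — I would use the joint spectrum of the operators $\Delta_{\ell} = P_{0,\ell}$. By Proposition \ref{simple spectrum} and the definition of $\MacD_T$ via stable limits (Corollary \ref{stability for MacD poly}), the eigenvalue of $P_{0,\ell}$ on $\MacD_T$ is the stable limit of $\sum_{\square \in \lambda^{(n)}} q^{\ell T(\square)} t^{\ell c(\square)} - \sum_{\square \in \lambda^{(n)}} t^{\ell c(\square)}$, i.e.
\begin{equation*}
P_{0,\ell}(\MacD_T) = \left( \sum_{\square \in \lambda^{(\infty)}} \left( q^{\ell T(\square)} - 1\right) t^{\ell c(\square)} \right) \MacD_T,
\end{equation*}
a finite sum since $T$ is finitely supported. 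In particular, restricting to the finitely many $T$ supported on a single box, the degree-$1$ eigenvectors of $\Delta$ in $\widetilde{W}_{\lambda}$ have eigenvalues $(q-1)t^{c(\square)}$ ranging over exactly the contents $c(\square)$ of boxes $\square \in \lambda^{(\infty)}$ that are addable-to-$1$. A graded $\sE^{+}$-module isomorphism $\widetilde{W}_{\lambda} \to \widetilde{W}_{\mu}$ restricts to a linear isomorphism on each graded piece intertwining the $\Delta$-action, hence matches these eigenvalue multisets; since the content multiset of $\lambda^{(\infty)}$ determines $\lambda$ (one reads off row lengths from how contents accumulate), we again get $\lambda = \mu$.

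Concretely the steps are: (1) record that a graded $\sE^{+}$-isomorphism is in particular a graded linear isomorphism commuting with each $P_{0,\ell}$; (2) identify the degree-$d$ graded piece of $\widetilde{W}_{\lambda}$ with the span of $\{\MacD_T : \sum_\square T(\square) = d\}$ and note each is a $P_{0,1}$-eigenvector with the eigenvalue displayed above; (3) observe the eigenvalues are distinct (this is already asserted in Theorem \ref{main theorem}, and follows because distinct finitely-supported $T$ give distinct Laurent polynomials in $q,t$ of the stated form — one can see this by comparing, say, the $\ell \to \infty$ or $q$-adic leading behaviour); (4) conclude that an isomorphism induces a degree-preserving bijection $\Omega(\lambda) \to \Omega(\mu)$ preserving eigenvalues, in particular on degree-$1$ elements a bijection between the content-multiset of boxes of $\lambda^{(\infty)}$ and that of $\mu^{(\infty)}$; (5) recover $\lambda$ from this multiset. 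I expect step (5) to be the main obstacle to write carefully: one must check that the multiset of contents of $\lambda^{(\infty)} = \bigcup_n (n - |\lambda|, \lambda_1, \ldots, \lambda_r)$, or more precisely the finite "non-generic" part of it together with the combinatorial constraint coming from strict column increase, pins down $\lambda$ — the first row of $\lambda^{(\infty)}$ is infinite so contributes all contents $\geq -(\ell(\lambda))$ eventually, and one recovers the lengths $\lambda_1, \ldots, \lambda_r$ by examining which negative contents occur with multiplicity $\geq 2$ and tracking the column structure. A safe fallback if that bookkeeping is delicate is to lean on the Hilbert-series argument of the first paragraph, or to use higher-degree eigenvalue data (degree $2$, boxes in two cells) to bootstrap the full shape, but the single-box contents should already suffice.
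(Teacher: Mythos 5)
Your overall strategy---separating the $\widetilde{W}_{\lambda}$ by the grading together with the spectrum of $\Delta$---is the same one the paper indicates (its own proof is omitted in this FPSAC version), but both of your concrete implementations rest on a false description of $\Omega(\lambda)$. The labellings in $\Omega(\lambda)$ are strictly increasing along columns (equivalently, $T|_{\lambda^{(n)}}$ must lie in $\RSSYT(\lambda^{(n)})$, which is exactly what makes $P_{T|_{\lambda^{(n)}}}$ exist, cf.\ Prop.~\ref{sym AHA submodules}). Hence the all-zero labelling and the single-box labellings do \emph{not} belong to $\Omega(\lambda)$ once $\lambda$ has a column of height $\geq 2$: a column of $\lambda^{(\infty)}$ of height $h$ must carry $h$ distinct non-negative values, so the lowest nonzero degree of $\widetilde{W}_{\lambda}$ is $\sum_{j}\binom{\lambda'_j+1}{2} = |\lambda|+\sum_{i\geq 1}(i-1)\lambda_i$, and for $|\lambda|\geq 2$ the degree-$0$ and degree-$1$ graded pieces vanish. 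Consequently your steps (2)--(5) (the multiset of ``contents of boxes addable-to-$1$'') have nothing to act on, and the Hilbert-series fallback is also unsupported as written: its degree-$0$/degree-$1$ description fails for the same reason, and low-degree dimensions genuinely do not separate shapes---for $\lambda=(3)$ and $\mu=(1,1)$ both Hilbert series begin $z^3+2z^4+5z^5$ and first differ only at $z^6$ (coefficient $10$ versus $9$)---so that route would require an actual injectivity argument for the full series, not ``a short combinatorial check.''

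The spectral idea is easily repaired, and the repaired form is presumably what the paper intends. The minimum of $\sum_{\square}T(\square)$ over $\Omega(\lambda)$ is attained only by the column-staircase filling $T_{\min}$, with value $\lambda'_j+1-i$ in row $i$ of column $j\leq\lambda_1$ and $0$ elsewhere (each column must be exactly $h-1,h-2,\ldots,0$), so the lowest nonzero graded piece of $\widetilde{W}_{\lambda}$ is one-dimensional, spanned by $\MacD_{T_{\min}}$. Your eigenvalue formula is correct: by Prop.~\ref{simple spectrum} and Cor.~\ref{stability for MacD poly}, $\Delta$ acts on $\MacD_T$ by $\sum_{\square}\left(q^{T(\square)}-1\right)t^{c(\square)}$. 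A graded $\sE^{+}$-isomorphism $\widetilde{W}_{\lambda}\to\widetilde{W}_{\mu}$ matches these one-dimensional lowest-degree pieces and intertwines $\Delta$, so the two eigenvalues coincide; extracting the coefficient of $q^{k}$ for $k\geq 1$ and setting $t=1$ gives $\#\{j:\lambda'_j\geq k\}=\lambda_k=\mu_k$ for all $k$, hence $\lambda=\mu$. Substituting this for your degree-$1$ analysis (and discarding step (5), which is no longer needed) yields a complete argument in the spirit of the paper's stated approach.
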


\begin{remark}
    Although we will not detail the construction here, there is a natural way to extend the $\sE^{+}$ action on each $\widetilde{W}_{\lambda}$ to an action of the full elliptic Hall algebra $\sE$ using a non-degenerate $q,t$-sesquilinear form.
\end{remark}

\section{Pieri Rule}\label{Pieri Rule section}
In this section we give the description of a Pieri rule for the generalized symmetric Macdonald functions $\MacD_{T}$. We need to consider the following $q,t$-rational function.

\begin{defn}
    For $T \in \RSSYT(\lambda)$ define 
     $$K_T(q,t):= \frac{[\mu(T)]_{t}!}{[n]_{t}!} \prod_{(\square_1,\square_2) \in \I(T)} \left( \frac{q^{T(\square_1)}t^{c(\square_1)} - q^{T(\square_2)}t^{c(\square_2) +1 }}{q^{T(\square_1)}t^{c(\square_1)} - q^{T(\square_2)}t^{c(\square_2)}} \right).$$
\end{defn}

Using Prop. \ref{expansion of sym into nonsym} and some book-keeping we obtain the following finite-rank Pieri formula.

\begin{thm}\label{Pieri Rule for Finite vars}
    For $T \in \RSSYT(\lambda)$ and $1 \leq r \leq n$ we have the expansion 
    $$e_r[X_1+\ldots + X_n]P_T = \sum_{S} d^{(r)}_{S,T} P_{S}$$ where 

\begin{equation*}
    \begin{split}
        \frac{d^{(r)}_{S,T}}{t^{ {r\choose 2}}e_r(1,\ldots,t^{n-1})K_{S}(q,t)} =
     \sum_{\substack{\tau \in \PSYT(\lambda;T) \\ \text{s.t.}\\ \Psi^r(\tau) \in \PSYT(\lambda;S)}} 
         t^{c_{\tau}(1)+\ldots + c_{\tau}(r)}
        \prod_{(\square_1,\square_2) \in \Inv(\tau)} \left( \frac{q^{T(\square_1)}t^{c(\square_1) +1} - q^{T(\square_2)}t^{c(\square_2)}}{q^{T(\square_1)}t^{c(\square_1)} - q^{T(\square_2)}t^{c(\square_2)}} \right) ~\times \\ 
        \prod_{(\square_1,\square_2) \in \Inv(\Psi^r(\tau))} \left( \frac{q^{S(\square_1)}t^{c(\square_1)} - q^{S(\square_2)}t^{c(\square_2)}}{q^{S(\square_1)}t^{c(\square_1)} - q^{S(\square_2)}t^{c(\square_2) +1}} \right)
    \end{split}
\end{equation*}
and $S$ ranges over all $S \in \RSSYT(\lambda)$ one can obtain from $T$ by adding $r$ $1$'s to the boxes of $T$ with at most one $1$ being added to each box.
\end{thm}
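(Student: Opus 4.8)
The plan is to realize $e_r[X_1+\ldots+X_n]$ as an explicit operator on $W_{\lambda}$, compute it in the $F$-basis, and re-expand in the $P_S$-basis. Being symmetric in the $X_i$, the element $e_r[X_1+\ldots+X_n]$ commutes with $T_1,\ldots,T_{n-1}$ (symmetric polynomials in the $X_i$ lie in the centre of the affine Hecke subalgebra of $\sD_n$ generated by the $X_i$ and the $T_i$, by Bernstein's theorem), hence with $\epsilon^{(n)}$; therefore multiplication by it preserves $W_{\lambda}=\epsilon^{(n)}(V_{\lambda})$ and agrees there with $\epsilon^{(n)}e_r[X_1+\ldots+X_n]\epsilon^{(n)}$. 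Using Proposition~\ref{expansion of sym into nonsym} I write $P_T=\sum_{\tau\in\PSYT(\lambda;T)}b_{\tau}F_{\tau}$ with $b_{\tau}$ the displayed product over $\Inv(\tau)$, so it suffices to compute the action of $\epsilon^{(n)}e_r[X_1+\ldots+X_n]\epsilon^{(n)}$ on each $F_{\tau}$.

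The combinatorial engine is the fourth relation of Proposition~\ref{weight basis prop}, $F_{\Psi(\tau)}=q^{w_{\tau}(1)}X_n\pi_n^{-1}F_{\tau}$. Iterating it and using $\pi_nX_i=X_{i+1}\pi_n$ for $i<n$ together with $\pi_nX_n=qX_1\pi_n$ yields, for $1\le r\le n$,
\[
X_{n-r+1}X_{n-r+2}\cdots X_n\,\pi_n^{-r}F_{\tau}\;=\;q^{-(w_{\tau}(1)+\cdots+w_{\tau}(r))}\,F_{\Psi^{r}(\tau)},
\]
so that $X_{n-r+1}\cdots X_n\,\pi_n^{-r}$ realizes the move $\Psi^{r}$ on the weight basis; here one uses that applying $\Psi$ $j$ times carries the label $j+1$ of $\tau$, which occupies a box of weight $w_{\tau}(j+1)$, into the position of the label $1$. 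I then rewrite $\epsilon^{(n)}e_r[X_1+\ldots+X_n]\epsilon^{(n)}$ through this operator: expanding $e_r$ as the sum of its $\binom{n}{r}$ squarefree monomials and collapsing each onto $X_{n-r+1}\cdots X_n$ by commuting Hecke generators through $\epsilon^{(n)}$, and then absorbing the residual factor $\pi_n^{r}$, whose $\theta_1$-component acts on $F_{\tau}$ through the spectrum $q^{w_{\tau}(\cdot)}t^{c_{\tau}(\cdot)}$ of Proposition~\ref{weight basis prop}, should produce precisely the scalar $t^{\binom{r}{2}}e_r(1,t,\ldots,t^{n-1})$ (the $t$-enumerator of the $\mathfrak{S}_n$-orbit of $(1^r,0^{n-r})$), the content weight $t^{c_{\tau}(1)+\cdots+c_{\tau}(r)}$, and the compensating $q$-powers. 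Feeding in the $F$-expansion of $P_T$, applying $\tau\mapsto\Psi^{r}(\tau)$ termwise, and re-expanding each $\epsilon^{(n)}F_{\Psi^{r}(\tau)}$ back in the $P_S$-basis by Proposition~\ref{expansion of sym into nonsym} on the $S$-side---which contributes the second inversion product together with $K_S(q,t)$, the factor $[\mu(S)]_t!/[n]_t!$ being the ratio of symmetrizer normalizations for the stabilizer $\mathfrak{S}_{\mu(S)}$ of $\Min(S)$ and the $\I(S)$-product the comparison of $F_{\Top(S)}$ with $P_S$---assembles the asserted formula. That only $\tau$ with $\Psi^{r}(\tau)\in\PSYT(\lambda;S)$ for an $S$ obtained from $T$ by adding $r$ ones (at most one per box) contribute follows because $\mathfrak{p}_{\lambda}(\Psi^{r}(\tau))$ is exactly $T$ with a $1$ added in each box of $\tau$ carrying a label $\le r$, and because $\epsilon^{(n)}(U_{S'})=0$ for $S'\notin\RSSYT(\lambda)$ by Proposition~\ref{sym AHA submodules}.

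The main obstacle is this middle step: pinning down the exact operator identity for $\epsilon^{(n)}e_r[X_1+\ldots+X_n]\epsilon^{(n)}$ in terms of $X_{n-r+1}\cdots X_n$, $\pi_n^{-r}$ and the trivial and parabolic idempotents with every $q,t$-constant correct, and then executing the ensuing rearrangement---the ``book-keeping'' of the statement. One must commute the Hecke factors created by the monomial collapse and by expanding $\pi_n^{r}$ past the $F_{\tau}$ using the Knop--Sahi relations of Proposition~\ref{weight basis prop}, and then check that the accumulated corrections telescope precisely into the products over $\Inv(\tau)$ and $\Inv(\Psi^{r}(\tau))$ and into the prefactors $t^{\binom{r}{2}}e_r(1,t,\ldots,t^{n-1})$ and $K_S(q,t)$, rather than into something messier. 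The re-oriented conventions for the operators $\theta_i$ are what make this transparent, since they align the $\theta$-spectra of $F_{\tau}$ and $F_{\Psi^{r}(\tau)}$ so that the cancellations are visible.
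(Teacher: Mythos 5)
Your skeleton is sound and is almost certainly the route the paper intends (it only says the theorem follows from Prop.~\ref{expansion of sym into nonsym} ``and some book-keeping''): the iterated identity $X_{n-r+1}\cdots X_n\pi_n^{-r}F_{\tau}=q^{-(w_{\tau}(1)+\cdots+w_{\tau}(r))}F_{\Psi^{r}(\tau)}$ is correct, the collapse $\epsilon^{(n)}X_{I}\epsilon^{(n)}=t^{-d(I)}\epsilon^{(n)}X_{n-r+1}\cdots X_{n}\epsilon^{(n)}$ works via $T_iX_{i+1}T_i=tX_i$, one has $\pi_n^{r}\epsilon^{(n)}=t^{(n-1)+\cdots+(n-r)}\theta_1\cdots\theta_r\epsilon^{(n)}$, the $\theta$-eigenvalues cancel the $q$-powers and yield $t^{c_{\tau}(1)+\cdots+c_{\tau}(r)}$, and the support condition on $S$ follows from $\mathfrak{p}_{\lambda}(\Psi^{r}(\tau))$ together with Prop.~\ref{sym AHA submodules}.

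The genuine gap is the last step, which you dispatch by citing Prop.~\ref{expansion of sym into nonsym} ``on the $S$-side''. That proposition expands $P_S$ in the $F_{\sigma}$; what your argument actually needs is the inverse datum, namely the scalar $c_{\sigma}$ in $\epsilon^{(n)}F_{\sigma}=c_{\sigma}P_{S}$ for each individual $\sigma\in\PSYT(\lambda;S)$. Knowing $P_S=\sum_{\sigma}b^{S}_{\sigma}F_{\sigma}$ gives only the single relation $\sum_{\sigma}b^{S}_{\sigma}c_{\sigma}=1$ (from $\epsilon^{(n)}P_S=P_S$), not the individual $c_{\sigma}$; computing them requires a separate ingredient (orthogonality of the $F_{\tau}$ under a suitable form on $V_{\lambda}$, or a direct evaluation of the symmetrizer on the weight basis of the irreducible $\sA_n$-module $U_S$, or a leading-term comparison against $F_{\Top(S)}$), and this is exactly where $K_S(q,t)$ --- the normalization $[\mu(S)]_t!/[n]_t!$ and the $\I(S)$-product --- and the second inversion product must actually be produced. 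Moreover, your accounting of the remaining constants does not close: $\sum_{I}t^{-d(I)}=t^{-r(n-r)-\binom{r}{2}}e_r(1,\ldots,t^{n-1})$ cancels exactly against the factor $t^{r(n-r)+\binom{r}{2}}$ coming from $\pi_n^{r}\epsilon^{(n)}$, leaving the bare prefactor $e_r(1,\ldots,t^{n-1})$; so the advertised $t^{\binom{r}{2}}$ cannot come from the middle step you flag as the obstacle and has to be extracted from the uncomputed projection constants $c_{\Psi^{r}(\tau)}$. Until you prove a formula for $\epsilon^{(n)}F_{\sigma}$ in terms of $P_S$ with its exact $q,t$-constant, the identification of $d^{(r)}_{S,T}$ --- which is the entire content of the theorem --- is not established.
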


\begin{defn}\label{Pieri coeff defn}
    For $S,T \in \Omega(\lambda)$ and $r \geq 1$ define $\mathfrak{d}^{(r)}_{S,T} \in \mathbb{Q}(q,t)$ by 
    $$e_r[X]^{\bullet}(\MacD_T) = \sum_{S \in \Omega(\lambda)} \mathfrak{d}^{(r)}_{S,T} \MacD_{S}.$$ Define the \textit{\textbf{rank}} $\rk(T)$ to be the minimal $n \geq n_{\lambda}$ such that $T|_{\lambda^{(\infty)}\setminus\lambda^{(n)}} = 0.$ 
\end{defn}

\begin{remark}
    Note that from Theorem \ref{Pieri Rule for Finite vars} it is clear for $T \in \Omega(\lambda)$ and $ r\geq 1$ that each $S \in \Omega(\lambda)$ such that $\mathfrak{d}^{(r)}_{S,T} \neq 0$ will necessarily be obtained from $T$ by adding $r$ $1$'s to the boxes of $T$ with at most one $1$ being added to each box. As such the set of $S$ with $\mathfrak{d}^{(r)}_{S,T} \neq 0$ is finite.
\end{remark}
We can use the stability from Cor. \ref{stability for MacD poly} to obtain a Pieri rule.

\begin{cor}[Pieri Rule]\label{Pieri Rule}
     Let $S,T \in \Omega(\lambda)$ and $r \geq 1$. For all $n \geq \rk(T)+r$
     $$\mathfrak{d}^{(r)}_{S,T} = d^{(r)}_{S|_{\lambda^{(n)}},T|_{\lambda^{(n)}}}.$$
\end{cor}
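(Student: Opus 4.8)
The plan is to deduce the stable Pieri coefficients from the finite-rank ones by pushing the identity $e_r[X_1+\ldots+X_n]P_T = \sum_S d^{(r)}_{S,T}P_S$ of Theorem \ref{Pieri Rule for Finite vars} through the inverse system $\{W_{\lambda^{(n)}},\Phi^{(n)}_\lambda\}$ and invoking the stability of the Macdonald polynomials from Corollary \ref{stability for MacD poly}. First I would fix $T \in \Omega(\lambda)$ and $r\geq 1$ and set $m := \rk(T)$. For every $n \geq m$ write $T_n := T|_{\lambda^{(n)}} \in \RSSYT(\lambda^{(n)})$, so that by Corollary \ref{stability for MacD poly} we have $\Phi^{(n)}_\lambda(P_{T_{n+1}}) = P_{T_n}$ and hence $\MacD_T = \lim_n P_{T_n}$, i.e. $\Pi_n(\MacD_T) = P_{T_n}$ where $\Pi_n:\widetilde W_\lambda \to W_{\lambda^{(n)}}$ is the canonical projection.

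Next I would compute $\Pi_n\big(e_r[X]^\bullet(\MacD_T)\big)$ in two ways. On one hand, by Definition \ref{Pieri coeff defn} and the (finite) remark following Theorem \ref{Pieri Rule for Finite vars}, $e_r[X]^\bullet(\MacD_T) = \sum_{S} \mathfrak{d}^{(r)}_{S,T}\MacD_S$ is a \emph{finite} sum over those $S \in \Omega(\lambda)$ obtained from $T$ by adding $r$ boxes' worth of $1$'s (at most one per box); all such $S$ satisfy $\rk(S) \leq \rk(T)+r$. Applying $\Pi_n$ gives $\sum_S \mathfrak{d}^{(r)}_{S,T}P_{S|_{\lambda^{(n)}}}$. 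On the other hand, since $\MacD_T$ has bounded $X$-degree and the multiplication operator $e_r[X]^\bullet$ is compatible with the projections — because $\Phi^{(n)}_\lambda X_{n+1} = 0$, so that $e_r[X_1+\ldots+X_n]$ is exactly $e_r$ of the power sums realized at level $n$, and the operators $p_\ell[X]^\bullet$ from Theorem \ref{main theorem} restrict to $q^{-\ell}P^{(n)}_{\ell,0}$ acting on $W_{\lambda^{(n)}}$ up to the grading shift — we get $\Pi_n\big(e_r[X]^\bullet(\MacD_T)\big) = e_r[X_1+\ldots+X_n]\,\Pi_n(\MacD_T) = e_r[X_1+\ldots+X_n]P_{T_n}$. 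Now for $n \geq \rk(T)+r$ we have $n \geq n_\lambda$ and $n \geq r$, so Theorem \ref{Pieri Rule for Finite vars} applies and yields $e_r[X_1+\ldots+X_n]P_{T_n} = \sum_{S'} d^{(r)}_{S',T_n}P_{S'}$, the sum over $S' \in \RSSYT(\lambda^{(n)})$ obtained from $T_n$ by adding $r$ $1$'s. Comparing the two expressions in the basis $\{P_{S'}: S' \in \RSSYT(\lambda^{(n)})\}$ of $W_{\lambda^{(n)}}$ (Prop. \ref{simple spectrum}), and using that the map $S \mapsto S|_{\lambda^{(n)}}$ is a bijection from $\{S \in \Omega(\lambda): \rk(S) \le \rk(T)+r,\ S \text{ obtainable from } T\}$ onto the corresponding set of $S' \in \RSSYT(\lambda^{(n)})$ for $n \geq \rk(T)+r$ (the newly added boxes all lie inside $\lambda^{(n)}$ when $n$ is this large), we conclude $\mathfrak{d}^{(r)}_{S,T} = d^{(r)}_{S|_{\lambda^{(n)}}, T|_{\lambda^{(n)}}}$ for all such $n$.

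The main obstacle I anticipate is the bookkeeping in the compatibility step: one must verify carefully that $\Pi_n\big(e_r[X]^\bullet(\MacD_T)\big)$ genuinely equals $e_r[X_1+\ldots+X_n]P_{T_n}$, i.e. that multiplication by the stable operator $e_r[X]^\bullet$ commutes with $\Pi_n$ and lands on the finite-variable symmetric function $e_r[X_1+\ldots+X_n]$ acting inside $W_{\lambda^{(n)}}$. This amounts to checking that the $\sE^+$-action of Theorem \ref{main theorem} restricts under $\Pi_n$ to the $\sD_n^{\mathrm{sph}}$-action on $W_{\lambda^{(n)}}$ (after the automorphism twist $\Gamma^{(n)}$), which is exactly what was used in the proof of the main theorem via $\Phi^{(n)}_\lambda X_{n+1}=0$; the subtlety is that $e_r$ is a polynomial in the $p_\ell$, so one needs the ring of symmetric functions to act compatibly, not just the individual generators, but this follows since $\Pi_n$ is an algebra-compatible projection on the polynomial-multiplication part. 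A secondary (purely combinatorial) point is confirming that no "boundary" terms $S$ with $\rk(S) = \rk(T)+r$ are lost or miscounted when $n$ is taken exactly equal to $\rk(T)+r$ rather than strictly larger; tracking where the $r$ added $1$'s can go shows they remain within $\lambda^{(\rk(T)+r)}$, so the bijection is already valid at $n = \rk(T)+r$, giving the stated bound.
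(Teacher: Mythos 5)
Your proposal is correct and follows the route the paper itself indicates: project via $\Pi_n$, use the stability $\Phi^{(n)}_{\lambda}(P_{T'})=P_T$ of Corollary \ref{stability for MacD poly} together with the compatibility of $e_r[X]^{\bullet}$ with the connecting maps (from $\Phi^{(n)}_{\lambda}X_{n+1}=0$), apply Theorem \ref{Pieri Rule for Finite vars} at level $n$, and compare coefficients in the basis $\{P_{S'}\}$ of $W_{\lambda^{(n)}}$, noting that every relevant $S$ has $\rk(S)\leq \rk(T)+r$ so that restriction gives a bijection of index sets. This is essentially the same argument as the paper's (omitted) proof, so no further comment is needed.
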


\section{Family of Product-Series Identities}\label{Product-Sum Identities Sections}
In order to state the final result of this paper we need the following.
\begin{defn}\label{asymptotic peridoic standard tableaux defn}
    A non-negative \textit{\textbf{asymptotic periodic standard Young tableau}} with base shape $\lambda \in \Par$ is a labelling $\tau: \lambda^{(\infty)} \rightarrow \{ iq^{a}: i \geq 1, a \geq 0\}$ such that
    \begin{itemize}
        \item $\tau$ is strictly increasing along rows and columns
        \item The set of boxes $\square \in \lambda^{(\infty)}$ such that $\tau(\square) = iq^a$ for some $i \geq 1$ and $a > 0$ is finite.
        \item For all $ i \geq 1$ there exists a unique $\square \in \lambda^{(\infty)}$ such that $\tau(\square) = iq^a$ for some $a \geq 0.$
    \end{itemize}
    We will write $\APSYT(\lambda)$ for the set of all non-negative asymptotic periodic standard Young tableaux with base shape $\lambda \in \Par.$ If $\tau \in \APSYT(\lambda)$ has that for every $\square \in \lambda$, $\tau(\square) = iq^0$ for some $i \geq 1$ then we will call $\tau$ an \textit{\textbf{asymptotic standard Young tableau}} with base shape $\lambda.$ We will write $\ASYT(\lambda)$ for the set of asymptotic standard Young tableau with base shape $\lambda.$ As an abuse of notation will write $\mathfrak{p}_{\lambda}: \APSYT(\lambda) \rightarrow \Omega(\lambda)$ for the map given on $\tau \in \APSYT(\lambda)$ by $\mathfrak{p}_{\lambda}(\tau)(\square) = a$ whenever $\tau(\square) = iq^a$ for some $i \geq 1 $. We will let $\APSYT(\lambda;T)$ denote the set of all $\tau \in \APSYT(\lambda)$ with $\mathfrak{p}_{\lambda}(\tau)=T.$ 
\end{defn}

\begin{example}
 \ytableausetup{centertableaux, boxframe= normal, boxsize= 1.85em}\begin{ytableau}
4q^3&5q^3&6q^3&2q^2&3q^2&12q^1&13q^0&14q^0&15q^0& \none [\dots]\\
 1q^2& 6q^2 & 11q^1    \\
 8q^1& 9q^1 & \none \\
 10q^0& \none & \none \\
 \end{ytableau} $\in \APSYT(3,2,1)$
\end{example}

\begin{defn}\label{more APSYT defn}
    For $T \in \Omega(\lambda)$ define $S(T) \in \ASYT(\lambda)$ by ordering the boxes of $\lambda^{(\infty)}$ according to $\square_1 \leq \square_2$ if and only if 
\begin{itemize}
    \item $T(\square_1) > T(\square_2)$ or
    \item $T(\square_1) = T(\square_2)$ and $\square_1$ comes before $\square_2$ in the column-standard labelling of $\lambda^{(\infty)}.$
\end{itemize}
     Let $\tau \in \APSYT(\lambda;T).$
     An ordered pair of boxes $(\square_1,\square_2) \in \lambda^{(\infty)} \times \lambda^{(\infty)}$ is called an \textit{\textbf{inversion pair}} of $\tau$ if $S(T)(\square_1) < S(T)(\square_2)$ and $i > j$ where $\tau(\square_1) = iq^a$, $\tau(\square_2) = jq^b$ for some $a,b \geq 0.$
    The set of all inversion pairs of $\tau$ will be denoted by $\Inv(\tau)$ and we will write $\inv(\tau) = |\Inv(\tau)|.$ Define the \textit{\textbf{rank}} $\rk(\tau)$ to be the minimal $n \geq n_{\lambda}$ such that $\tau|_{\lambda^{(\infty)}\setminus \lambda^{(n)}}$ has consecutive labels. We will write $\mu_T:= \mu(T|_{\lambda^{(\rk(T))}})$ (see Def. \ref{decomposing RYT into SYT and partition}).
\end{defn}

Using a $t$-adic convergence argument and a limiting version of Cor. \ref{stability for MacD poly} we can show the following.
\begin{thm}\label{prod-sum formula}
For $T \in \Omega(\lambda)$ we have the following equality in $\mathbb{Q}(q)((t)):$
\begin{align*}
    \frac{\prod_{\square \in \lambda^{(\rk(T))}}\left( 1 -q^{-T(\square)}t^{\rk(T)-|\lambda|-c(\square)} \right)}{(1-t)^{\rk(T)}[\mu_T]_{t}!} &\prod_{(\square_1,\square_2) \in \I(\lambda^{(\rk(T))})} \left( \frac{1 - q^{T(\square_2) -T(\square_1)}t^{c(\square_2)-c(\square_1) }}{1 - q^{T(\square_2) -T(\square_1)}t^{c(\square_2) -c(\square_1)+1}}         \right)  \\
    &= \sum_{\tau \in \APSYT(\lambda;T)} t^{\inv(\tau)} \prod_{(\square_1,\square_2) \in \Inv(\tau)} \left( \frac{1 - q^{T(\square_2)-T(\square_1)}t^{c(\square_2)-c(\square_1) -1}}{1 - q^{T(\square_2)-T(\square_1)}t^{c(\square_2)-c(\square_1) +1}} \right).\\
\end{align*}
\end{thm}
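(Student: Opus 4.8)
The plan is to derive Theorem \ref{prod-sum formula} as a $t$-adic limit of the Pieri-type expansion in Theorem \ref{Pieri Rule for Finite vars}, or more precisely of the expansion of $P_T$ into the $F_\tau$ basis (Prop. \ref{expansion of sym into nonsym}) together with the stability result Cor. \ref{stability for MacD poly}. The starting point is the observation that the left-hand side of the identity is, up to elementary normalizing factors (the $[\mu_T]_t!$, the $(1-t)^{\rk(T)}$, and the product $\prod_{\square}(1 - q^{-T(\square)}t^{\rk(T)-|\lambda|-c(\square)})$ coming from $e_r(1,\ldots,t^{n-1})$-type symmetrization constants and from the passage from $\Top$ to the symmetrized vector), exactly the kind of coefficient that appears when one computes $\langle P_T, P_T\rangle$-style quantities or when one extracts the coefficient of a single $F_\tau$ in $P_T$. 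So first I would fix $n = \rk(T)$, write $T$ also for its restriction $T|_{\lambda^{(n)}}$, and expand $P_T$ in the $F_\tau$ basis using Prop. \ref{expansion of sym into nonsym}; then I would apply a suitable linear functional — the natural candidate is the one extracting the coefficient of $\Min(T)$ (or of $1\otimes S(T)$), since $\Min(T)$ is the unique minimal element and its inversion set is precisely $\I(T)$ appearing on the left-hand side.

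Next I would set up the $t$-adic convergence. The key point is that as $n \to \infty$ along the chain $\lambda^{(n)}$, the restricted tableaux $T|_{\lambda^{(n)}}$ stabilize (they just gain zero-labelled boxes), and by Cor. \ref{stability for MacD poly} the polynomials $P_{T|_{\lambda^{(n)}}}$ form a compatible system whose limit is $\MacD_T$. On the combinatorial side, $\PSYT(\lambda^{(n)};T|_{\lambda^{(n)}})$ grows, and in the $n\to\infty$ limit its relevant part is exactly $\APSYT(\lambda;T)$: the finitely-many boxes carrying a positive power of $q$ are controlled, while the infinitely many zero-labelled boxes in the first row get filled with a strictly increasing (hence essentially forced, up to the choice encoded by the $\Psi$-orbit) sequence of consecutive integers. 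The inversion products $\prod_{(\square_1,\square_2)\in\Inv(\tau)}(\cdots)$ converge $t$-adically because each factor $\frac{q^{a}t^{b+1}-q^{c}t^{d}}{q^a t^b - q^c t^d}$ tends to $1$ as the content difference $b-d\to\infty$, which happens for all but finitely many inversion pairs once the added boxes are far out in the first row; this is what makes the infinite sum over $\APSYT(\lambda;T)$ converge in $\mathbb{Q}(q)((t))$. I would make this precise by showing that for each fixed power of $t$ only finitely many $\tau\in\APSYT(\lambda;T)$ contribute, and that the finite-$n$ truncations agree with the limit up to higher order in $t$.

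The main obstacle, I expect, is bookkeeping the normalization constants correctly through the limit: one must track how the factor $t^{\binom r2}e_r(1,\ldots,t^{n-1})K_S(q,t)$ from Theorem \ref{Pieri Rule for Finite vars} (equivalently the $t^{-b_T}$ normalization of $F_{\Top(T)}$, the symmetrization idempotent $\epsilon^{(n)}$ with its $[n]_t!$, and the stabilizer correction $[\mu(T)]_t!$) combine, and verify that in the $n\to\infty$ limit they assemble into precisely $\frac{\prod_{\square}(1-q^{-T(\square)}t^{\rk(T)-|\lambda|-c(\square)})}{(1-t)^{\rk(T)}[\mu_T]_t!}$ on the left and the $t^{\inv(\tau)}$ weighting on the right. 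The appearance of $q^{-T(\square)}$ and the content shift $\rk(T)-|\lambda|-c(\square)$ strongly suggests this comes from the $\theta$-eigenvalue $q^{w}t^{c}$ evaluated along $\Min(T)$ versus $\Top(T)$ and from re-indexing contents when the first row of $\lambda^{(n)}$ lengthens; pinning down the exact exponent is delicate.

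Concretely, the steps in order are: (1) fix $n=\rk(T)$ and expand $P_T$ via Prop. \ref{expansion of sym into nonsym}; (2) identify the coefficient of $F_{\Min(T)}$ (resp. of $1\otimes S(T)$) and match it, using the $\Top$-to-$\Min$ inversion product and the definition of $K_T$, to the left-hand side of the claimed identity up to the normalization factors; (3) rewrite the remaining terms of the expansion as a sum over $\PSYT(\lambda^{(n)};T)$ and take the $t$-adic limit $n\to\infty$, using Cor. \ref{stability for MacD poly} and the convergence of inversion-product factors to $1$, to replace the index set by $\APSYT(\lambda;T)$ and produce the right-hand side with its $t^{\inv(\tau)}$ weights; (4) check convergence in $\mathbb{Q}(q)((t))$ by the finiteness-per-$t$-degree argument; (5) reconcile all normalization constants, which is the hard part, to land exactly on the stated equality.
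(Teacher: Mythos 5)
Your overall strategy (a finite-rank identity in $W_{\lambda^{(n)}}$ pushed through a $t$-adic limit using the stability of Cor.~\ref{stability for MacD poly}) is indeed the route the paper indicates, and your convergence discussion is essentially right: for $\tau \in \APSYT(\lambda;T)$ all but finitely many inversion pairs have $\square_2$ far out in the first row, so their factors tend to $1$ while $t^{\inv(\tau)}$ supplies increasing valuation, giving convergence in $\mathbb{Q}(q)((t))$. But the core of the proof --- the finite-rank scalar identity that is actually being passed to the limit --- is not pinned down, and your two concrete candidates for it fail. Extracting the coefficient of $F_{\Min(T)}$ from the expansion of Prop.~\ref{expansion of sym into nonsym} produces a \emph{single} product, namely $t^{|\I(T)|}\prod_{\I(T)}\bigl(1-q^{\Delta T}t^{\Delta c-1}\bigr)/\bigl(1-q^{\Delta T}t^{\Delta c}\bigr)$, not a sum over all of $\PSYT(\lambda^{(n)};T)$, so it cannot converge to the right-hand side; and extracting the coefficient of $1\otimes S(T)$ is identically zero for $|T|>0$, since each $F_\tau$ with $\mathfrak{p}_{\lambda}(\tau)=T$ is homogeneous of $X$-degree $|T|$ while $1\otimes S(T)$ has degree $0$. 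Moreover, normalizing the inversion factors in Prop.~\ref{expansion of sym into nonsym} gives coefficients $t^{\inv(\tau)}\prod_{\Inv(\tau)}\bigl(1-q^{\Delta T}t^{\Delta c-1}\bigr)/\bigl(1-q^{\Delta T}t^{\Delta c}\bigr)$, whereas the summand in Thm.~\ref{prod-sum formula} has denominator exponent $\Delta c+1$; so the claimed weights are \emph{not} the raw $F$-expansion coefficients, and an additional per-$\tau$ ingredient (e.g.\ the scalars relating $\epsilon^{(n)}(F_\tau)$ to $P_T$, or an evaluation functional that is nonzero on every $F_\tau$) must be identified and computed. Your proposal never supplies it.

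The same gap shows on the left-hand side: the numerator $\prod_{\square\in\lambda^{(\rk(T))}}\bigl(1-q^{-T(\square)}t^{\rk(T)-|\lambda|-c(\square)}\bigr)$ involves single-box factors $q^{-T(\square)}$, which cannot be assembled from the inversion-pair products your scheme manipulates; it has the shape of a principal-specialization or norm-type evaluation, i.e.\ precisely the closed-form side of the missing finite-rank identity, while the product over $\I(T|_{\lambda^{(\rk(T))}})$ is the reciprocal of the $K_T$-factors rather than the $F_{\Min(T)}$-coefficient. You flag the reconciliation of these normalizations as ``the hard part'' and leave it open, but that reconciliation is exactly where the content of the theorem lies: without identifying the functional (or pair of computations) whose two evaluations give the two sides at finite rank, steps (2) and (5) of your outline cannot be carried out, and the limit argument, however carefully done, has nothing to apply to.
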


\begin{example}
If $\lambda = \emptyset$ and 
$T =$ 
\ytableausetup{centertableaux, boxframe= normal, boxsize= 2.25em}\begin{ytableau}
1&0&0&\none [\dots]\\
\end{ytableau} $\in \Omega(\emptyset)$
then from Thm. \ref{prod-sum formula} we get 
$$\frac{1-q^{-1}t}{1-t} = \sum_{k=0}^{\infty} t^k \prod_{j=1}^{k} \left( \frac{1-q^{-1}t^{j-1}}{1-q^{-1}t^{j+1}}\right).$$
\end{example}

\printbibliography

\end{document}